\documentclass[preprint,dvipdfmx,leqno]{elsarticle}
\usepackage{
amsmath,amssymb,amsthm,ascmac,
bm,natbib,hyperref,here
,txfonts
}
\usepackage[dvipsnames]{xcolor}
\usepackage{tikz}
\usetikzlibrary{intersections,calc}
\newtheoremstyle{mydefinition}
{}{}
{\normalfont}{}
{\bfseries}{}
{\newline}{\thmname{#1}\thmnumber{\ #2}\thmnote{\quad#3}.}
\newtheoremstyle{mydefinition2}
{}{}
{\normalfont}{}
{\bfseries}{}
{\newline}{\thmname{#1}\thmnumber{\ #2}\thmnote{\quad#3}}
\theoremstyle{mydefinition}
\newtheorem{Thm}{Theorem}
\newtheorem{Prop}[Thm]{Proposition}
\newtheorem{Lem}[Thm]{Lemma}
\newtheorem{Def}{Definition}
\theoremstyle{mydefinition2}
\newtheorem{Prop2}[Thm]{Proposition}
\newtheorem{Lem2}[Thm]{Lemma}
\newtheorem*{Lem*}{Lemma}
\makeatletter
\newcommand{\opnorm}{\@ifstar\@opnorms\@opnorm}
\newcommand{\@opnorms}[1]{
\left|\mkern-1.5mu\left|\mkern-1.5mu\left|#1
\right|\mkern-1.5mu\right|\mkern-1.5mu\right|}
\newcommand{\@opnorm}[2][]{
\mathopen{#1|\mkern-1.5mu#1|\mkern-1.5mu#1|}#2
\mathclose{#1|\mkern-1.5mu#1|\mkern-1.5mu#1|}}
{\end{smallmatrix}\right)}
{\end{smallmatrix}\right]}
{\end{smallmatrix}\right\}}
\@addtoreset{equation}{section}
\makeatother
\hyphenation{
sub-dif-fer-en-tial 
sub-dif-fer-en-tials 
}
\begin{document}
\newcounter{tmpcnt}
\setcounter{tmpcnt}{1}
\journal{arXiv}
\begin{frontmatter}
\title{Periodic Solutions of the complex Ginzburg-Landau Equation in bounded domains}
\author[add1]{Takanori Kuroda}
\ead{1d\_est\_quod\_est@ruri.waseda.jp}
\author[add2,fn2]{Mitsuharu \^Otani}
\ead{otani@waseda.jp}
\fntext[fn2]{Partly supported by the Grant-in-Aid for Scientific Research, \#18K03382, the Ministry of Education, Culture, Sports, Science and Technology, Japan.}
\address[add1]{Department of Mathematics, School of Science and Engineering, \\ Waseda University, 3-4-1 Okubo Shinjuku-ku, Tokyo, 169-8555, JAPAN}
\address[add2]{Department of Applied Physics, School of Science and Engineering, \\ Waseda University, 3-4-1 Okubo Shinjuku-ku, Tokyo, 169-8555, JAPAN}
\begin{abstract}
In this paper, we are concerned with complex Ginzburg-Landau (CGL) equations.
There are several results on the global existence and smoothing effects of solutions to the initial boundary value problem for (CGL) in bounded or unbounded domains.

In this paper, we study the time periodic problem for (CGL) in bounded domains.
The main strategy in this paper is to regard (CGL) as a parabolic equation with monotone and non-monotone perturbations and to apply non-monotone perturbation theory of parabolic equations developed by \^Otani (1984).
\end{abstract}
\begin{keyword}
periodic problem\sep
complex Ginzburg-Landau equation\sep
bounded domain\sep
subdifferential operator
\MSC[2010]
35Q56\sep 
47J35\sep 
39A23 
\end{keyword}
\end{frontmatter}
\section{Introduction}\label{sec-1}
In this paper we are concerned with the time periodic problem for following complex Ginzburg-Landau equation in a bounded domain \(\Omega \subset \mathbb{R}^N\) with smooth boundary \(\partial \Omega\):\vspace{-1mm}
\begin{equation}
\tag*{(CGL)}
\left\{
\begin{aligned}
&\partial_t u(t, x) \!-\! (\lambda \!+\! i\alpha)\Delta u \!+\! (\kappa \!+\! i\beta)|u|^{q-2}u \!-\! \gamma u \!=\! f(t, x)
&&\hspace{-2mm}\mbox{in}\ (t, x) \in [0, T] \times \Omega,\\
&u(t, x) = 0&&\hspace{-2mm}\mbox{on}\ (t, x) \in [0, T] \times \partial\Omega,\\
&u(0, x) = u(T,x)&&\hspace{-2mm}\mbox{in}\ x \in \Omega,
\end{aligned}
\right.
\end{equation}
where \(\lambda, \kappa > 0\), \(\alpha, \beta, \gamma \in \mathbb{R}\) are parameters; 
\(i = \sqrt{-1}\) is the imaginary unit; 
\(f: \Omega \times [0, T] \rightarrow \mathbb{C}\) (\(T > 0\)) is a given external force.
Our unknown function \(u:\overline{\Omega} \times [0,\infty) \rightarrow \mathbb{C}\) is complex valued.
Under a suitable assumption on \(\lambda,\kappa,\alpha,\beta\), that is, they belong to the so-called CGL-region (see Figure \ref{CGLR}), we shall establish the existence of periodic solutions without any restriction on \(q\in(2,\infty)\), \(\gamma\in\mathbb{R}^1\) and the size of \(f\).

As extreme cases, (CGL) corresponds to two well-known equations: 
semi-linear heat equations (when \(\alpha=\beta=0\)) and nonlinear Schr\"odinger equations (when \(\lambda=\kappa=0\)).
Thus in general, we can expect (CGL) has specific features of two equations.

Equation (CGL) was introduced by Landau and Ginzburg in 1950 \cite{GL1} as a mathematical model for superconductivity.
Since then it has been revealed that many nonlinear partial differential equations arising from physics can be rewritten in the form of (CGL) (\cite{N1}) as well.

Mathematical studies of the initial boundary value problem for (CGL) are pursued extensively by several authors.
The first treatment was due to Temam \cite{T1}, where weak global solution was constructed by the Galerkin method.
Levermore-Oliver \cite{LO1} constructed weak global solutions to (CGL) on \(N\)-dimensional torus \(\mathbb{T}^N\) by an argument similar to that in the proof of Leray's existence theorem for global weak solutions of Navier-Stokes equations.
They also showed the existence of unique global classical solution of (CGL) for \(u_0 \in {\rm C}^2(\mathbb{T}^N)\) under the conditions \(q \leq 2N/(N-2)_+\) and \(\left(\frac{\alpha}{\lambda}, \frac{\beta}{\kappa}\right) \in {\rm CGL}(c_q^{-1})\) (see Figure \ref{CGLR}).
Ginibre-Velo \cite{GinibreJVeloG1996} showed the existence of global strong solutions for (CGL) in whole space \(\mathbb{R}^N\) under the condition \(\left(\frac{\alpha}{\lambda}, \frac{\beta}{\kappa}\right) \in {\rm CGL}(c_q^{-1})\) without any upper bounds on \(q\) with initial data taken form \(\mathrm{H}^1(\mathbb{R}^N)\cap\mathrm{L}^q(\mathbb{R}^N)\).

Subsequently, Okazawa-Yokota \cite{OY1} treated (CGL) in the framework of the maximal monotone operator theory in complex Hilbert spaces and proved global existence of solutions and a smoothing effect for bounded domains with \(\lambda,\kappa,\alpha,\beta\) belonging to the CGL-region.
Another approach based on the perturbation theory for subdifferential operator was introduced in \cite{KOS1}, where the existence of global solutions together with some smoothing effects in general domains is discussed.

As for the global dynamics of solutions for (CGL) is studied in Takeuchi-Asakawa-Yokota \cite{TAY1}.
They showed the existence of global attractor for bounded domains.

In this paper, we show the existence of time periodic solutions for (CGL) in bounded domains.
Here we regard (CGL) as a parabolic equation governed by the leading term \(-\lambda\Delta u+\kappa|u|^{q-2}u\) which can be described as a subdifferential operator of a certain convex functional, with a monotone perturbation \(-i\alpha\Delta u\) together with a non-monotone perturbation \(i\beta|u|^{q-2}u\).
The time periodic problem for parabolic equations governed by subdfferential operators with non-monotone perturbations is already discussed in \^Otani \cite{O3}, which has a fairly wide applicability.
This theory, however, cannot be applied directly to (CGL) because of the presence of the monotone term, \(-i\alpha\Delta u\).

To cope with this difficulty, we first introduce an auxiliary equation, (CGL) added \(\varepsilon|u|^{r-2}u\) (\(\varepsilon>0,r>q\)) and show the existence of periodic solutions for this equation.
Then letting \(\varepsilon\downarrow0\) with suitable a priori estimates, we obtain desired periodic solutions to (CGL).

This  paper consists of five sections.
In \S2, we fix some notations, prepare some preliminaries and state our main result.
In \S3, we consider an auxiliary equation and show the existence of periodic solutions to the equation.
Our main result is proved in \S4.

\section{Notations and Preliminaries}\label{sec-2}
In this section, we first fix some notations in order to formulate (CGL) as an evolution equation in a real product function space based on the following identification:
\[
\mathbb{C} \ni u_1 + iu_2 \mapsto (u_1, u_2)^{\rm T} \in \mathbb{R}^2.
\]
Moreover we set:
\[
\begin{aligned}
&(U \cdot V)_{\mathbb{R}^2} :=  u_1 v_1 + u_2 v_2,\quad |U|=|U|_{\mathbb{R}^2}, \qquad U=(u_1, u_2)^{\rm T}, \ V=(v_1, v_2)^{\rm T} \in \mathbb{R}^2,\\[1mm]
&\mathbb{L}^2(\Omega) :={\rm L}^2(\Omega) \times {\rm L}^2(\Omega),\quad (U, V)_{\mathbb{L}^2} := (u_1, v_1)_{{\rm L}^2} + (u_2, v_2)_{{\rm L}^2},\\[1mm]
&\qquad U=(u_1, u_2)^{\rm T},\quad V=(v_1, v_2)^{\rm T} \in \mathbb{L}^2(\Omega),\\[1mm]
&\mathbb{L}^r(\Omega) := {\rm L}^r(\Omega) \times {\rm L}^r(\Omega),\quad |U|_{\mathbb{L}^r}^r := |u_1|_{{\rm L}^r}^r + |u_2|_{{\rm L}^r}^r\quad\ U \in \mathbb{L}^r(\Omega)\ (1\leq r < \infty),\\[1mm]
&\mathbb{H}^1_0(\Omega) := {\rm H}^1_0(\Omega) \times {\rm H}^1_0(\Omega),\
(U, V)_{\mathbb{H}^1_0} := (u_1, v_1)_{{\rm H}^1_0} + (u_2, v_2)_{{\rm H}^1_0}\ \ U, V \in \mathbb{H}^1_0(\Omega).
\end{aligned}
\]
We use the differential symbols to indicate differential operators which act on each component of \({\mathbb{H}^1_0}(\Omega)\)-elements:
\[
\begin{aligned}
& D_i = \frac{\partial}{\partial x_i}: \mathbb{H}^1_0(\Omega) \rightarrow \mathbb{L}^2(\Omega),\\
&D_i U = (D_i u_1, D_i u_2)^{\rm T} \in \mathbb{L}^2(\Omega) \ (i=1, \cdots, N),\\[2mm]
& \nabla = \left(\frac{\partial}{\partial x_1}, \cdots, \frac{\partial}{\partial x_N}\right): \mathbb{H}^1_0(\Omega) \rightarrow ({\rm L}^2(\Omega))^{2 N},\\
&\nabla U=(\nabla u_1, \nabla u_2)^T \in ({\rm L}^2(\Omega))^{2 N}.
\end{aligned}
\]

We further define, for \(U=(u_1, u_2)^{\rm T},\ V= (v_1, v_2)^{\rm T},\ W = (w_1, w_2)^{\rm T}\),
\[
\begin{aligned}
&U(x) \cdot \nabla V(x) := u_1(x) \nabla v_1(x) + u_2(x) \nabla v_2(x) \in \mathbb{R}^N,\\[2mm]
&( U(x) \cdot \nabla V(x) ) W(x) := ( u_1(x) \nabla v_1(x)w_1(x) , u_2(x)  \nabla v_2(x) w_2(x))^{\rm T} \in \mathbb{R}^{2N},\\[2mm]
&(\nabla U(x) \cdot \nabla V(x)) := \nabla u_1(x) \cdot \nabla v_1(x) + \nabla u_2(x) \cdot \nabla v_2(x) \in \mathbb{R}^1,\\[2mm]
&|\nabla U(x)| := \left(|\nabla u_1(x)|^2_{\mathbb{R}^N} + |\nabla u_2(x)|^2_{\mathbb{R}^N} \right)^{1/2}.
\end{aligned}
\]

In addition, \(\mathcal{H}^S\) denotes the space of functions with values in \(\mathbb{L}^2(\Omega)\) defined on \([0, S]\) (\(S > 0\)), which is a Hilbert space with the following inner product and norm.
\[
\begin{aligned}
&\mathcal{H}^S := {\rm L}^2(0, S; \mathbb{L}^2(\Omega)) \ni U(t), V(t),\\
&\quad\mbox{with inner product:}\ (U, V)_{\mathcal{H}^S} = \int_0^S (U, V)_{\mathbb{L}^2}^2 dt,\\
&\quad\mbox{and norm:}\ |U|_{\mathcal{H}^S}^2 = (U, U)_{\mathcal{H}^S}.
\end{aligned}
\]

As a realization in \(\mathbb{R}^2\) of the imaginary unit \(i\) in \(\mathbb{C}\), we introduce the following matrix \(I\), which is a linear isometry on \(\mathbb{R}^2\):
\[
I = \begin{pmatrix}
0 & -1\\ 1 & 0
\end{pmatrix}.
\]
We abuse \(I\) for the realization of \(I\) in \(\mathbb{L}^2(\Omega)\), i.e., \(I U = ( - u_2, u_1 )^{\rm T}\) for all \(U = (u_1, u_2)^{\rm T} \in \mathbb{L}^2(\Omega)\).

Then \(I\) satisfies the following properties (see \cite{KOS1}):

\begin{enumerate}
\item Skew-symmetric property:
\begin{equation}
\label{skew-symmetric_property}
(IU \cdot V)_{\mathbb{R}^2} = -\ (U \cdot IV)_{\mathbb{R}^2}; \hspace{4mm}
(IU \cdot U)_{\mathbb{R}^2} = 0 \hspace{4mm}
\mbox{for each}\ U, V \in \mathbb{R}^2.
\end{equation}

\item Commutative property with the differential operator \(D_i = \frac{\partial}{\partial x_i}\):
\begin{equation}
\label{commutative_property}
I D_i = D_i I:\mathbb{H}^1_0 \rightarrow \mathbb{L}^2\ (i=1, \cdots, N).
\end{equation}

\end{enumerate}

Let \({\rm H}\) be a Hilbert space and denote by \(\Phi({\rm H})\) the set of all lower semi-continuous convex function \(\phi\) from \({\rm H}\) into \((-\infty, +\infty]\) such that the effective domain of \(\phi\) given by \({\rm D}(\phi) := \{u \in {\rm H}\mid \ \phi(u) < +\infty \}\) is not empty.
Then for \(\phi \in \Phi({\rm H})\), the subdifferential of \(\phi\) at \(u \in {\rm D}(\phi)\) is defined by
\[
\partial \phi(u) := \{w \in {\rm H}\mid  (w, v - u)_{\rm H} \leq \phi(v)-\phi(u) \hspace{2mm} \mbox{for all}\ v \in {\rm H}\},
\]
which is a possibly multivalued maximal monotone operator with domain\\
\({\rm D}(\partial \phi) = \{u \in {\rm H}\mid  \partial\phi(u) \neq \emptyset\}\).
However for the discussion below, we have only to consider the case where \(\partial \phi\) is single valued.

We define functionals \(\varphi, \ \psi_r:\mathbb{L}^2(\Omega) \rightarrow [0, +\infty]\) (\(r\geq2\)) by
\begin{align}
\label{varphi}
&\varphi(U) :=
\left\{
\begin{aligned}
&\frac{1}{2} \displaystyle\int_\Omega |\nabla U(x)|^2 dx
&&\mbox{if}\ U \in \mathbb{H}^1_0(\Omega),\\[3mm]
&+ \infty &&\mbox{if}\ U \in \mathbb{L}^2(\Omega)\setminus\mathbb{H}^1_0(\Omega),
\end{aligned}
\right.
\\[2mm]
\label{psi}
&\psi_r(U) :=
\left\{
\begin{aligned}
&\frac{1}{r} \displaystyle\int_\Omega |U(x)|_{\mathbb{R}^2}^r dx
&&\mbox{if}\ U \in \mathbb{L}^r(\Omega) \cap \mathbb{L}^2(\Omega),\\[3mm]
&+\infty &&\mbox{if}\ U \in \mathbb{L}^2(\Omega)\setminus\mathbb{L}^r(\Omega).
\end{aligned}
\right.
\end{align}
Then it is easy to see that \(\varphi, \psi_r \in \Phi(\mathbb{L}^2(\Omega))\) and their subdifferentials are given by
\begin{align}
\label{delvaphi}
&\begin{aligned}[t]
&\partial \varphi(U)=-\Delta U\ \mbox{with} \ {\rm D}( \partial \varphi) = \mathbb{H}^1_0(\Omega)\cap\mathbb{H}^2(\Omega),\\[2mm]
\end{aligned}\\
\label{delpsi}
&\partial \psi_r(U) = |U|_{\mathbb{R}^2}^{r-2}U\ {\rm with} \ {\rm D}( \partial \psi_r) = \mathbb{L}^{2(r-1)}(\Omega) \cap \mathbb{L}^2(\Omega).
\end{align}
Furthermore for any $\mu>0$, we can define the Yosida approximations
 \(\partial \varphi_\mu,\ \partial \psi_\mu\) of \(\partial \varphi,\ \partial \psi\) by
\begin{align}
\label{Yosida:varphi}
&\partial \varphi_\mu(U) := \frac{1}{\mu}(U - J_\mu^{\partial \varphi}U) 
= \partial \varphi(J_\mu^{\partial \varphi} U), 
\quad J_\mu^{\partial \varphi} : = ( 1 + \mu\ \partial \varphi)^{-1},
\\[2mm]
\label{Yosida:psi}
&\partial \psi_{r,\mu}(U) := \frac{1}{\mu} (U - J_\mu^{\partial \psi_r} U) 
= \partial \psi_r( J_\mu^{\partial \psi_r} U ), 
\quad  J_\mu^{\partial \psi_r} : = ( 1 + \mu\ \partial \psi_r)^{-1}.
\end{align}
The second identity holds since \(\partial\varphi\) and \(\partial\psi_r\) are single-valued.

Then it is well known that \(\partial \varphi_\mu, \ \partial \psi_{r,\mu}\) are Lipschitz continuous on \(\mathbb{L}^2(\Omega)\) and satisfies the following properties (see \cite{B1}, \cite{B2}):
\begin{align}
\label{asd}
\psi_r(J_\mu^{\partial\psi_r}U)&\leq\psi_{r,\mu}(U) \leq \psi_r(U),\\
\label{as}
|\partial\psi_{r,\mu}(U)|_{{\rm L}^2}&=|\partial\psi_{r}(J_\mu^{\partial\psi}U)|_{{\rm L}^2}\leq|\partial\psi_r(U)|_{\mathbb{L}^2}\quad\forall\ U \in {\rm D}(\partial\psi_r),\ \forall \mu > 0,
\end{align}
where \(\psi_r\) is the Moreau-Yosida regularization of \(\psi\) given by the following formula:
\[
\psi_{r,\mu}(U) = \inf_{V \in \mathbb{L}^2(\Omega)}\left\{
\frac{1}{2\mu}|U-V|_{\mathbb{L}^2}^2+\psi_r(V)
\right\}
=\frac{\mu}{2}|(\partial\psi_r)_\mu(U)|_{\mathbb{L}^2}^2+\psi_r(J_\mu^{\partial\psi}U)\geq0.
\]
Moreover since \(\psi_r(0)=0\), it follows from the definition of subdifferential operators and \eqref{asd} that
\begin{align}\label{asdf}
(\partial\psi_{r,\mu}(U),U)_{\mathbb{L}^2} = (\partial\psi_{r,\mu}(U),U-0)_{\mathbb{L}^2}
\leq \psi_{r,\mu}(U) - \psi_{r,\mu}(0) \leq \psi_r(U).
\end{align}

Here for later use, we prepare some fundamental properties of \(I\) in connection with \(\partial \varphi,\ \partial \psi_r,\  \partial \varphi_\mu,\ \partial \psi_{r,\mu}\).
\begin{Lem2}[(c.f. \cite{KOS1} Lemma 2.1)]
The following angle conditions hold.
\label{Lem:2.1}
\begin{align}
\label{orth:IU}
&(\partial \varphi(U), I U)_{\mathbb{L}^2} = 0\quad 
\forall U \in {\rm D}(\partial \varphi),\quad 
(\partial \psi_r(U), I U)_{\mathbb{L}^2} = 0\quad \forall U \in {\rm D}(\partial \psi_r), 
\\[2mm]
\label{orth:mu:IU}
&(\partial \varphi_\mu(U), I U)_{\mathbb{L}^2} = 0,\quad 
(\partial \psi_{r,\mu}(U), I U)_{\mathbb{L}^2} = 0 \quad 
\forall U \in \mathbb{L}^2(\Omega), 
\\[2mm]
\label{orth:Ipsi}
&\begin{aligned}
&(\partial \psi_q(U), I \partial \psi_r(U))_{\mathbb{L}^2} = 0,\\
&(\partial \psi_q(U), I \partial \psi_{r,\mu}(U))_{\mathbb{L}^2}=0\quad 
\forall U \in {\rm D}(\partial \psi_r), \forall q,r \geq 2,
\end{aligned}\\
\label{angle}
&(\partial\varphi(U),\partial\psi_r(U))_{\mathbb{L}^2} \geq 0.
\end{align}
\end{Lem2}

\begin{proof}
We only give a proof of the second property in \eqref{orth:Ipsi} here.
Let \(W=J_\mu^{\partial\psi_r}U\), then \(U = W + \mu\partial\psi_r(W)\).
It holds
\[
(\partial \psi_q(U), I \partial \psi_{r,\mu}(U))_{\mathbb{L}^2}
=
(|U|^{q-2}(W + \mu|W|^{r-2}W),I|W|^{r-2}W)_{\mathbb{L}^2}=0.
\]
\end{proof}

Thus (CGL) can be reduced to the following evolution equation:
\[
\tag*{(ACGL)}
\left\{
\begin{aligned}
&\frac{dU}{dt}(t) \!+\! \lambda\partial\varphi(U) \!+\! \alpha I \partial \varphi(U) \!+\! (\kappa+ \beta I) \partial \psi_q(U) \!-\! \gamma U \!=\! F(t),\quad t \in (0,T),\\
&U(0) =U(T),
\end{aligned}
\right.
\]
where \(f(t, x) = f_1(t, x) + i f_2(t, x)\) is identified with \(F(t) = (f_1(t, \cdot), f_2(t, \cdot))^{\rm T} \in \mathbb{L}^2(\Omega)\).

In order to state our main results, we introduce the following region:
\[
\begin{aligned}
\label{CGLR}
{\rm CGL}(r) &:= \left\{(x,y) \in \mathbb{R}^2 \mid xy \geq 0\ \mbox{or}\ \frac{|xy| - 1}{|x| + |y|} < r\right\}\\
&= {\rm S}_1(r) \cup {\rm S}_2(r) \cup\ {\rm S}_3(r) \cup {\rm S}_4(r),
\end{aligned}
\]
where \({\rm S}_i(\cdot)\) (\(i=1,2,3,4\)) are given by
\begin{equation}
\label{CGLRS1-4}
\begin{aligned}
&{\rm S}_1(r) := \left\{(x, y) \in \mathbb{R}^2 \mid |x| \leq r\right\},\\
&{\rm S}_2(r) := \left\{(x, y) \in \mathbb{R}^2 \mid |y| \leq r\right\},\\
&{\rm S}_3(r) := \left\{(x, y) \in \mathbb{R}^2 \mid xy > 0\right\},\\
&{\rm S}_4(r) := \left\{(x, y) \in \mathbb{R}^2 \mid |1 + xy| < r |x - y|\right\}.
\end{aligned}
\end{equation}
This region is frequently referred as the CGL region.
Also, we use the parameter \(c_q \in [0,\infty)\) measuring the strength of 
the nonlinearity:
\[
\label{cq}
c_q := \frac{q - 2}{2\sqrt{q - 1}}.
\]

This exponent \(c_q\) will play an important role in what follows, which is based on the following inequality.
\begin{Lem2}[(c.f. \cite{KOS1} Lemma 4.1)] 
    The following inequalities hold for all \label{key_inequality}
    $U \in {\rm D}(\partial \varphi) \cap
    {\rm D}(\partial \psi_q)$:
    \begin{align}
     \label{key_inequality_1}
     &|(\partial \varphi(U), 
     I \partial \psi_q(U))_{\mathbb{L}^2}|
     \leq c_{q}(\partial \varphi(U), 
     \partial \psi_q(U))_{\mathbb{L}^2}.
    \end{align}
   \end{Lem2}

In this paper we are concerned with periodic solutions of (ACGL) in the following sense:
\begin{Def}[Periodic solutions]
A function \(U \in {\rm C}([0,T];\mathbb{L}^2(\Omega))\) is a periodic solution of (ACGL) if the following conditions are satisfied:
\begin{enumerate}\renewcommand{\labelenumi}{(\roman{enumi})}
\item \(U \in {\rm D}(\partial\varphi)\cap{\rm D}(\partial\psi_q)\) and satisfies (ACGL) for a.e. \(t\in(0,T)\),
\item \(\frac{dU}{dt},\partial\varphi(U),\partial\psi_q(U)\in{\rm L}^2(0,T;\mathbb{L}^2(\Omega))\),
\item \(\varphi(U(t))\) and \(\psi_q(U(t))\) are absolutely continuous on \([0,T]\),
\item \(U(0)=U(T)\).
\end{enumerate}
\end{Def}
We note that condition (iii) follows from (ii) and hence periodic solutions \(U\) belong to \({\rm C}([0,T];\mathbb{L}^q(\Omega)\cap\mathbb{H}^1_0(\Omega))\).

Our main results can be stated as follows.

\begin{Thm}[Existence of Periodic Solutions]\label{MTHM}
Let \(\Omega \subset \mathbb{R}^N\) be a bounded domain of \({\rm C}^2\)-regular class.
Let \((\frac{\alpha}{\lambda}, \frac{\beta}{\kappa}) \in {\rm CGL}(c_q^{-1})\) and \(\gamma \in \mathbb{R}\).
Then for all \(F \in {\rm L}^2(0, T;\mathbb{L}^2(\Omega))\) with given \(T > 0\), there exists a periodic solution to (ACGL).
\end{Thm}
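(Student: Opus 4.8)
The plan is to follow the strategy announced in the introduction: first solve an $\varepsilon$-regularized problem whose leading part is a genuine subdifferential with good coercivity in a higher Lebesgue exponent, then pass to the limit $\varepsilon\downarrow 0$. Concretely, for $\varepsilon>0$ and a fixed $r>q$ (say $r=2(q-1)$ or larger, chosen so that $\partial\psi_r$ controls $\partial\psi_q$), consider
\[
\tag*{$(\mathrm{ACGL})_\varepsilon$}
\frac{dU}{dt}+\lambda\partial\varphi(U)+\alpha I\partial\varphi(U)+(\kappa+\beta I)\partial\psi_q(U)+\varepsilon\partial\psi_r(U)-\gamma U=F(t),\qquad U(0)=U(T).
\]
The existence of a periodic solution to $(\mathrm{ACGL})_\varepsilon$ is the content of \S3; it is obtained by viewing $\lambda\partial\varphi+\varepsilon\partial\psi_r$ (plus possibly $\kappa\partial\psi_q$, which is also monotone) as the principal subdifferential operator, and $\alpha I\partial\varphi+\beta I\partial\psi_q-\gamma U$ as a perturbation, then invoking the non-monotone perturbation theory for periodic problems of \^Otani \cite{O3}. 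The role of $\varepsilon\partial\psi_r$ is to provide an a priori bound on $U$ in $\mathrm{L}^r$, hence compactness, that is uniform enough to run the abstract periodic existence machinery; the monotone term $\alpha I\partial\varphi(U)$, which obstructs direct application to (ACGL), is harmless here because the angle condition \eqref{orth:mu:IU}--\eqref{orth:IU} makes it invisible to the basic $\mathbb{L}^2$ energy estimate.

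The heart of the argument is a set of a priori estimates for solutions $U_\varepsilon$ of $(\mathrm{ACGL})_\varepsilon$ that are \emph{independent of $\varepsilon$}. First, test $(\mathrm{ACGL})_\varepsilon$ with $U_\varepsilon$: using \eqref{skew-symmetric_property} the terms $\alpha(I\partial\varphi(U_\varepsilon),U_\varepsilon)$ and $\beta(I\partial\psi_q(U_\varepsilon),U_\varepsilon)$ vanish by \eqref{orth:IU}, and $\frac{d}{dt}\tfrac12|U_\varepsilon|_{\mathbb{L}^2}^2+2\lambda\varphi(U_\varepsilon)+q\kappa\psi_q(U_\varepsilon)+r\varepsilon\psi_r(U_\varepsilon)=\gamma|U_\varepsilon|_{\mathbb{L}^2}^2+(F,U_\varepsilon)$; integrating over the period and using periodicity $U_\varepsilon(0)=U_\varepsilon(T)$ kills the time-derivative term, and a Poincaré/Young argument yields a bound on $\int_0^T(\varphi(U_\varepsilon)+\psi_q(U_\varepsilon)+\varepsilon\psi_r(U_\varepsilon))\,dt$ and on $\sup_t|U_\varepsilon(t)|_{\mathbb{L}^2}$ in terms of $|F|_{\mathcal H^T}$, $\gamma$ and the Poincaré constant only. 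Next, test with $\frac{dU_\varepsilon}{dt}$ (or with $\partial\varphi(U_\varepsilon)$): here the cross term $\alpha(I\partial\varphi(U_\varepsilon),\tfrac{dU_\varepsilon}{dt})$ is handled because $\tfrac{d}{dt}\varphi(U_\varepsilon)=(\partial\varphi(U_\varepsilon),\tfrac{dU_\varepsilon}{dt})$ and $(I\partial\varphi(U_\varepsilon),\partial\varphi(U_\varepsilon))=0$; the genuinely dangerous term $\beta(I\partial\psi_q(U_\varepsilon),\partial\varphi(U_\varepsilon))$ is exactly the one absorbed by Lemma~\ref{key_inequality}: since $(\tfrac{\alpha}{\lambda},\tfrac{\beta}{\kappa})\in\mathrm{CGL}(c_q^{-1})$ one shows $|\beta|\,|(\partial\varphi(U),I\partial\psi_q(U))|\le |\beta| c_q(\partial\varphi(U),\partial\psi_q(U))$ and this is dominated by the nonnegative term $\kappa(\partial\varphi(U),\partial\psi_q(U))$ coming from $(\kappa\partial\psi_q(U),\partial\varphi(U))\ge0$ (cf. \eqref{angle}), with a strictly positive remaining fraction because the CGL condition is an open one. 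This gives a uniform bound on $\tfrac{dU_\varepsilon}{dt}$ in $\mathcal H^T$, on $\sup_t\varphi(U_\varepsilon(t))$, and on $\lambda\partial\varphi(U_\varepsilon)+\kappa\partial\psi_q(U_\varepsilon)$ in $\mathcal H^T$; a further monotonicity/orthogonality bookkeeping separates $\partial\varphi(U_\varepsilon)$ and $\partial\psi_q(U_\varepsilon)$ individually in $\mathcal H^T$, and elliptic regularity promotes $\varphi(U_\varepsilon)$-bounds to $\mathbb{H}^1_0\cap\mathbb{H}^2$-bounds.

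Finally, pass to the limit. The uniform bounds give, along a subsequence, $U_\varepsilon\rightharpoonup U$ weakly in $\mathrm{L}^2(0,T;\mathbb{H}^1_0\cap\mathbb{H}^2)$ with $\tfrac{dU_\varepsilon}{dt}\rightharpoonup\tfrac{dU}{dt}$ in $\mathcal H^T$, hence (Aubin--Lions) $U_\varepsilon\to U$ strongly in $\mathcal H^T$ and in $\mathrm{C}([0,T];\mathbb{L}^2(\Omega))$, so periodicity $U(0)=U(T)$ is preserved. Demiclosedness of the maximal monotone operators $\partial\varphi$ and $\partial\psi_q$ identifies the weak limits of $\partial\varphi(U_\varepsilon)$, $\partial\psi_q(U_\varepsilon)$ as $\partial\varphi(U)$, $\partial\psi_q(U)$; the term $\varepsilon\partial\psi_r(U_\varepsilon)$ tends to $0$ in $\mathcal H^T$ because $\varepsilon\psi_r(U_\varepsilon)$ was bounded, so $\varepsilon|\partial\psi_r(U_\varepsilon)|_{\mathcal H^T}\le\varepsilon^{1/2}(\varepsilon^{1/2}|\partial\psi_r(U_\varepsilon)|)\to0$ after the appropriate test (or simply extract it weakly and note its $\mathcal H^T$-norm is $O(\varepsilon)$ from testing $(\mathrm{ACGL})_\varepsilon$ with $\partial\psi_r(U_\varepsilon)$ and using \eqref{orth:Ipsi}, \eqref{angle}). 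Passing to the limit in $(\mathrm{ACGL})_\varepsilon$ then yields (ACGL) a.e., and properties (i)--(iv) of the definition of periodic solution follow from the regularity just obtained together with the remark that (iii) is automatic from (ii). The main obstacle is clearly the $\varepsilon$-independence of the second-order estimate: one must carefully arrange the absorption in Lemma~\ref{key_inequality} so that the surviving coefficient in front of $(\partial\varphi(U_\varepsilon),\partial\psi_q(U_\varepsilon))_{\mathbb{L}^2}$ stays bounded away from zero uniformly in $\varepsilon$ (which it does, since $\varepsilon\partial\psi_r$ only adds further nonnegative terms and does not interact badly with $I\partial\varphi$ by \eqref{orth:mu:IU}-type identities), and to make sure none of the constants in the periodic a priori estimates degenerate as $\varepsilon\downarrow0$.
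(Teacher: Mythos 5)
Your overall skeleton (add $\varepsilon\partial\psi_r$ with $r>q$, prove $\varepsilon$-uniform estimates, pass to the limit) is exactly the paper's, but three of your individual steps do not go through as written. First, for the auxiliary periodic problem you propose to treat $\alpha I\partial\varphi+\beta I\partial\psi_q-\gamma U$ as the non-monotone perturbation and invoke \cite{O3}. This is precisely the obstruction the paper identifies in the introduction: $\alpha I\partial\varphi$ is of the same order as the principal part, is not compact relative to it, and satisfies no smallness condition (the region ${\rm CGL}(c_q^{-1})$ contains points with $|\alpha|/\lambda$ arbitrarily large), so the structural hypotheses of \^Otani's perturbation theorem fail even though the term is invisible to the $\mathbb{L}^2$ energy estimate. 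The paper instead keeps $\alpha I\partial\varphi$ inside the \emph{monotone} part: it solves the initial value problem for $\lambda\partial\varphi+\varepsilon\partial\psi_r+\kappa\partial\psi_q+\alpha I\partial\varphi_\mu+U-h$ (Yosida approximation, then $\mu\downarrow0$), observes that the Poincar\'e map is a strict contraction so the periodic problem with frozen datum $h$ is uniquely solvable, and then runs a Schauder fixed point on $h\mapsto\beta I\partial\psi_q(U_h)-(\gamma+1)U_h$; the added term $\varepsilon\partial\psi_r$ is exactly what makes this map send a ball of $\mathcal{H}^T$ into itself, via the interpolation \eqref{zaqws}. Without some such device your auxiliary problem is not actually solved.

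Second, your absorption of the dangerous term $\beta B$, $B=(\partial\varphi(U),I\partial\psi_q(U))_{\mathbb{L}^2}$: you bound $|\beta||B|\le|\beta|c_qG$ with $G=(\partial\varphi(U),\partial\psi_q(U))_{\mathbb{L}^2}$ and absorb it into $\kappa G$, which requires $|\beta|/\kappa\le c_q^{-1}$, i.e.\ only the strip ${\rm S}_2(c_q^{-1})$ rather than the whole region ${\rm CGL}(c_q^{-1})$ assumed in the theorem. To cover ${\rm S}_1$, ${\rm S}_3$ and ${\rm S}_4$ the paper adds $\delta^2$ times the $\partial\varphi$-tested identity to the $\partial\psi_q$-tested identity, extracts the extra positive cross term $2\sqrt{(\lambda-\epsilon)(\kappa-\epsilon)\delta^2(G^2+B^2)}$ by the arithmetic--geometric mean inequality, and shows that the resulting coefficient $J(\delta,\epsilon)$ of $|B|$ is nonnegative for a suitable $\delta$ exactly when $(\alpha/\lambda,\beta/\kappa)\in{\rm CGL}(c_q^{-1})$; this optimization over $\delta$ is the missing idea. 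Third, in the limit you claim $\varepsilon\partial\psi_r(U_\varepsilon)\to0$ strongly in $\mathcal{H}^T$ because its norm is $O(\varepsilon)$, or because $\varepsilon^{1/2}|\partial\psi_r(U_\varepsilon)|_{\mathcal{H}^T}$ is bounded; but testing with $\varepsilon\partial\psi_r(U)$ forces you to absorb $\alpha^2|\partial\varphi(U)|^2_{\mathbb{L}^2}$ with an $\varepsilon$-independent constant, and what survives is only $\varepsilon^2\int_0^T|\partial\psi_r(U_\varepsilon)|^2_{\mathbb{L}^2}\,dt\le C$, i.e.\ boundedness, not smallness, of $\varepsilon\partial\psi_r(U_\varepsilon)$ in $\mathcal{H}^T$. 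The paper therefore extracts only a weak limit $g$ and identifies $g=0$ from the uniform bound $\sup_t\varepsilon\psi_r(U_\varepsilon(t))\le C$, which gives $|\varepsilon|U_\varepsilon|^{r-2}U_\varepsilon|^{r/(r-1)}_{\mathbb{L}^{r/(r-1)}}=\varepsilon^{1/(r-1)}\,\varepsilon\int_\Omega|U_\varepsilon|^r\,dx\to0$ uniformly in $t$; you should repair your limit passage along these lines.
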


\section{Auxiliary Problems}\label{sec-4}
In this section, we consider the following auxiliary equation:
\[
\tag*{(AE)\(_\varepsilon\)}
\left\{
\begin{aligned}
&\frac{dU}{dt}(t) \!+\! \lambda\partial\varphi(U)\!+\!\alpha I\partial\varphi(U)\!+\!\varepsilon\partial\psi_r(U)\!+\!(\kappa\!+\! \beta I) \partial \psi_q(U) \!-\! \gamma U \!=\! F(t),\ t \!\in\! (0,T),\\
&U(0) = U(T),
\end{aligned}
\right.
\]
with \(r > q\) and \(\varepsilon>0\).
Then we have
\begin{Prop}
Let \(F \in \mathcal{H}^T\), \(\varepsilon>0\) and \(r > q > 2\).\label{GWP}
Then there exists a periodic solution for (AE)\(_\varepsilon\).
\end{Prop}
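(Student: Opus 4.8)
The idea is to recast (AE)\(_\varepsilon\) as an abstract periodic problem governed by a subdifferential operator with a non-monotone perturbation, to which the theory of \^Otani \cite{O3} applies. Put
\[
\Phi := \lambda\varphi + \varepsilon\psi_r + \kappa\psi_q \in \Phi(\mathbb{L}^2(\Omega)),\qquad
G(U) := \alpha I\partial\varphi(U) + \beta I\partial\psi_q(U) - \gamma U ,
\]
so that (AE)\(_\varepsilon\) reads \(\frac{dU}{dt} + \partial\Phi(U) + G(U) = F\), \(U(0)=U(T)\). Since \(r>q>2\) and \(\Omega\) is bounded, \({\rm D}(\partial\psi_r)\subset{\rm D}(\partial\psi_q)\) and \(\partial\Phi = \lambda\partial\varphi + \varepsilon\partial\psi_r + \kappa\partial\psi_q\) with \({\rm D}(\partial\Phi)=\mathbb{H}^1_0(\Omega)\cap\mathbb{H}^2(\Omega)\cap\mathbb{L}^{2(r-1)}(\Omega)\); moreover \(\Phi\) is coercive on \(\mathbb{L}^2(\Omega)\) by the Poincar\'e inequality, and its sublevel sets are bounded in \(\mathbb{H}^1_0(\Omega)\cap\mathbb{L}^r(\Omega)\), hence precompact in \(\mathbb{L}^2(\Omega)\) by Rellich's theorem. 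The r\^ole of the auxiliary term \(\varepsilon\partial\psi_r\) (\(r>q\)) is to supply extra dissipation making both the non-monotone term \(\beta I\partial\psi_q(U)\) and the monotone-but-not-subdifferential term \(\alpha I\partial\varphi(U)\) subordinate to \(\partial\Phi\); this is precisely the setting of \cite{O3}.

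Concretely I would run the underlying Poincar\'e-map argument. For each \(U_0\in\mathbb{L}^2(\Omega)\) the Cauchy problem for (AE)\(_\varepsilon\) on \([0,T]\) with \(U(0)=U_0\) — a parabolic equation whose subdifferential principal part \(\partial\Phi\) enjoys the smoothing effect and whose perturbation \(G\) is locally Lipschitz and of lower order — is globally well posed by the non-monotone perturbation theory of subdifferential evolution equations (as in \cite{KOS1}), globality being secured by the a priori bounds below; let \(\mathcal{P}_\varepsilon:U_0\mapsto U(T;U_0)\) be the associated Poincar\'e map. Then \(\mathcal{P}_\varepsilon\) is continuous on \(\mathbb{L}^2(\Omega)\) — subtract two solutions, test with the difference, and use the monotonicity of \(\partial\Phi\), the skew-symmetry \eqref{skew-symmetric_property}, the uniform bounds together with local Lipschitz estimates for \(\partial\psi_q\) and \(\partial\psi_r\), and Gronwall's inequality — and it is compact, since the smoothing estimate below shows that \(\mathcal{P}_\varepsilon\) carries bounded subsets of \(\mathbb{L}^2(\Omega)\) into bounded subsets of \(\mathbb{H}^1_0(\Omega)\), which are precompact in \(\mathbb{L}^2(\Omega)\). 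Finally, the energy estimate below shows that a sufficiently large closed ball \(B_R\subset\mathbb{L}^2(\Omega)\) satisfies \(\mathcal{P}_\varepsilon(B_R)\subset B_R\). Schauder's fixed point theorem then yields \(U_0^{\ast}\in B_R\) with \(\mathcal{P}_\varepsilon U_0^{\ast} = U_0^{\ast}\), and the corresponding Cauchy solution is the desired periodic solution of (AE)\(_\varepsilon\).

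It remains to supply the two a priori estimates, which form the technical core. Testing (AE)\(_\varepsilon\) with \(U\) and annihilating \((\alpha I\partial\varphi(U),U)_{\mathbb{L}^2}\) and \((\beta I\partial\psi_q(U),U)_{\mathbb{L}^2}\) by \eqref{orth:IU} gives
\[
\tfrac{1}{2}\tfrac{d}{dt}|U|_{\mathbb{L}^2}^2 + 2\lambda\varphi(U) + r\varepsilon\psi_r(U) + q\kappa\psi_q(U) - \gamma|U|_{\mathbb{L}^2}^2 = (F,U)_{\mathbb{L}^2},
\]
whence, via the Poincar\'e inequality and Gronwall, uniform bounds for \(U\) in \({\rm L}^\infty(0,T;\mathbb{L}^2(\Omega))\cap{\rm L}^2(0,T;\mathbb{H}^1_0(\Omega))\cap{\rm L}^r(0,T;\mathbb{L}^r(\Omega))\cap{\rm L}^q(0,T;\mathbb{L}^q(\Omega))\) and invariance of a large ball of \(\mathbb{L}^2(\Omega)\). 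For the smoothing estimate, test (AE)\(_\varepsilon\) successively with \(\partial\varphi(U)\), \(\partial\psi_r(U)\) and \(\partial\psi_q(U)\): in the first, \((\tfrac{dU}{dt},\partial\varphi(U))_{\mathbb{L}^2}=\tfrac{d}{dt}\varphi(U)\) integrates to \(0\) over a period, \((I\partial\varphi(U),\partial\varphi(U))_{\mathbb{L}^2}=0\), the terms \((\partial\psi_r(U),\partial\varphi(U))_{\mathbb{L}^2}\) and \((\partial\psi_q(U),\partial\varphi(U))_{\mathbb{L}^2}\) are \(\ge0\) by \eqref{angle}, and the cross term \(\beta(I\partial\psi_q(U),\partial\varphi(U))_{\mathbb{L}^2}\) is dominated by \(|\beta|c_q(\partial\varphi(U),\partial\psi_q(U))_{\mathbb{L}^2}\) through the key inequality \eqref{key_inequality_1}; using the CGL-region hypothesis \((\tfrac{\alpha}{\lambda},\tfrac{\beta}{\kappa})\in{\rm CGL}(c_q^{-1})\) this cross term is absorbed into \(\lambda|\partial\varphi(U)|_{\mathbb{L}^2}^2 + \kappa(\partial\varphi(U),\partial\psi_q(U))_{\mathbb{L}^2}\) (the \(\varepsilon\psi_r\)-term adding further room), and \(\int_0^T\varphi(U)\,dt\) is controlled by the energy estimate, yielding a uniform bound for \(\partial\varphi(U)\) in \({\rm L}^2(0,T;\mathbb{L}^2(\Omega))\). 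The tests with \(\partial\psi_r(U)\) and \(\partial\psi_q(U)\) are then routine — the skew terms vanish by \eqref{orth:Ipsi} and \eqref{skew-symmetric_property}, the \(\partial\varphi(U)\)-terms being already under control — and give uniform bounds for \(\partial\psi_r(U)\) and \(\partial\psi_q(U)\), hence also for \(\tfrac{dU}{dt}\), in \({\rm L}^2(0,T;\mathbb{L}^2(\Omega))\); conditions (i)--(iv) of the definition of periodic solution, including absolute continuity of \(t\mapsto\varphi(U(t))\) and \(t\mapsto\psi_q(U(t))\), follow from these bounds together with the chain rule for subdifferentials (\cite{B1},\cite{B2}). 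The main obstacle is precisely this absorption step: one must check that under the CGL-region condition the constant \(|\beta|c_q\) in front of \((\partial\varphi(U),\partial\psi_q(U))_{\mathbb{L}^2}\) is genuinely dominated — in general only after testing with a linear combination of \(\partial\varphi(U)\) and \(\partial\psi_q(U)\) tailored to the geometry of \({\rm CGL}(c_q^{-1})\), exactly as in \cite{KOS1} — so that the non-monotone term is truly absorbed rather than merely bounded; the remaining steps are routine.
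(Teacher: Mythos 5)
Your architecture is genuinely different from the paper's. The paper never forms a Poincar\'e map for the full non-monotone equation: it freezes the troublesome terms by replacing \(\beta I\partial\psi_q(U)-(\gamma+1)U\) with a prescribed \(h\in\mathcal{H}^T\), so that the frozen problem (AE)\(_\varepsilon^h\) is governed by monotone operators plus \(+U\); its Poincar\'e map is then a \emph{strict contraction} (\(|U(T)-V(T)|_{\mathbb{L}^2}\le e^{-T}|U_0-V_0|_{\mathbb{L}^2}\)), giving a unique periodic solution \(U_h\) by Banach's theorem. The outer fixed point is taken over the forcing, \(\mathcal{F}:h\mapsto \beta I\partial\psi_q(U_h)-(\gamma+1)U_h\), on a ball of \(\mathcal{H}^T\) equipped with the \emph{weak} topology, where continuity is obtained purely by compactness (a priori estimates, Ascoli, demiclosedness) and no Lipschitz dependence on data is ever needed. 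You instead apply Schauder directly to the Poincar\'e map of the full equation on a ball of \(\mathbb{L}^2(\Omega)\), using compactness from the smoothing effect. That route can be made to work, but its linchpin --- single-valuedness and continuity of \(U_0\mapsto U(T;U_0)\) for the \emph{non-monotone} equation --- is exactly the step you pass over too quickly: ``local Lipschitz estimates for \(\partial\psi_q\) and \(\partial\psi_r\)'' is not a valid justification, since \(\partial\psi_q\) is not locally Lipschitz on \(\mathbb{L}^2\) for \(q>2\). The correct mechanism is that the strong monotonicity of \(\varepsilon\partial\psi_r\), namely \((\partial\psi_r(U)-\partial\psi_r(V),U-V)_{\mathbb{L}^2}\gtrsim\int(|U|+|V|)^{r-2}|U-V|^2dx\), absorbs the difference of the \(\beta I\partial\psi_q\)-terms via the pointwise Young inequality \((|U|+|V|)^{q-2}\le\eta(|U|+|V|)^{r-2}+C_\eta\) (here \(r>q\) is essential), after which Gronwall applies. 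Without spelling this out, uniqueness and continuity of \(\mathcal{P}_\varepsilon\) are unproven, and the whole fixed-point argument is unsupported; this is the gap the paper's freezing device is designed to avoid.

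A second, more minor defect: your a priori (smoothing) estimate invokes the CGL-region hypothesis and the key inequality \eqref{key_inequality_1} to absorb \(\beta(I\partial\psi_q(U),\partial\varphi(U))_{\mathbb{L}^2}\). Proposition \ref{GWP} carries \emph{no} condition on \((\alpha,\beta)\), and none is needed at fixed \(\varepsilon>0\): the paper controls the non-monotone term through the interpolation \eqref{zaqws}, \(|\partial\psi_q(U)|_{\mathbb{L}^2}^2\le\eta|\partial\psi_r(U)|_{\mathbb{L}^2}^2+C_\eta|U|_{\mathbb{L}^2}^2\), together with \eqref{orth:Ipsi}, reserving the CGL-region/\(c_q\) argument for the \(\varepsilon\)-independent estimates of Section 4. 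As written, your proof establishes a strictly weaker statement than the one claimed (though one still sufficient for Theorem \ref{MTHM}). You should also separate the periodic energy identities (\(\int_0^T\frac{d}{dt}\varphi(U)\,dt=0\)) from the Cauchy-problem smoothing estimates with \(\sqrt{t}\)-weights: the former are unavailable before the fixed point is found, and it is the latter that make \(\mathcal{P}_\varepsilon\) compact from \(\mathbb{L}^2(\Omega)\) into \(\mathbb{H}^1_0(\Omega)\).
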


In order to prove this proposition, for a given \(h\in\mathcal{H}^T\) we consider Cauchy problem of the form:
\[
\tag*{(IVP)\(_\mu^h\)}
\left\{
\begin{aligned}
&\frac{dU}{dt}(t) \!+\! \lambda\partial\varphi(U)\!+\!\varepsilon\partial\psi_r(U)\!+\!\kappa\partial \psi_q(U)\!+\!\alpha I\partial\varphi_\mu(U)\!+\! U \!-\!h(t) \!=\! F(t),\ t \!\in\! (0,T),\\
&U(0) = U_0,
\end{aligned}
\right.
\]
where \(U_0 \in \mathbb{L}^2(\Omega)\).
We claim that this equation has a unique solution \(U = U^h \in {\rm C}([0,T];\mathbb{L}^2(\Omega))\) satisfying the following regularities
\begin{enumerate}\renewcommand{\labelenumi}{(\roman{enumi})}
\item $U \in {\rm W}^{1,2}_{\rm loc}((0,T);\mathbb{L}^2(\Omega))$, 
\item $U(t) \in {\rm D}(\partial \varphi) \cap {\rm D}(\partial \psi_q) \cap {\rm D}(\partial \psi_r)$ for a.e. $t \in (0,T)$
	   and satisfies (AE)\(_\mu\) for a.e. $t \in (0,T)$,
\item $\varphi(U(\cdot))$, \(\psi_q(U(\cdot))\), \(\psi_r(U(\cdot)) \in
	   {\rm L}^1(0,T)\) and $t\varphi(U(t))$, \(t\psi_q(U(t))\), \(t\psi_r(U(t))
	   \in {\rm L}^\infty(0,T)\),
\item $\sqrt{t} \frac{d}{dt}U(t)$,
	   $\sqrt{t} \partial \varphi(U(t))$,
	   $\sqrt{t} \partial \psi_q(U(t))$, 
	   $\sqrt{t} \partial \psi_r(U(t)) 
	   \in {\rm L}^2(0,T;\mathbb{L}^2(\Omega))$.
\end{enumerate}

Since \(\partial\varphi_\mu(U)\) is a Lipschitz perturbation, to ensure the above claim, we only have to check the following:
\begin{Lem}\label{asta}
The operator \(\lambda\partial\varphi + \varepsilon\partial\psi_r + \kappa\partial\psi_q\) is maximal monotone in \(\mathbb{L}^2(\Omega)\) and satisfies
\begin{equation}
\label{ast}
\lambda\partial\varphi + \varepsilon\partial\psi_r + \kappa\partial\psi_q
=
\partial(\lambda\varphi+\varepsilon\psi_r+\kappa\psi_q).
\end{equation}
\end{Lem}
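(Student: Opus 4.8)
The plan is to read \eqref{ast} as a special case of the classical sum rule for the subdifferential of a Dirichlet-type form perturbed by a monotone pointwise nonlinearity. Put $j(s):=\tfrac{\varepsilon}{r}|s|_{\mathbb{R}^2}^{\,r}+\tfrac{\kappa}{q}|s|_{\mathbb{R}^2}^{\,q}$ for $s\in\mathbb{R}^2$; this is a nonnegative convex function of class ${\rm C}^1$ with $j(0)=0$ and $\nabla j(s)=(\varepsilon|s|^{r-2}+\kappa|s|^{q-2})s$, $\nabla j(0)=0$. Setting $\psi(U):=\int_\Omega j(U(x))\,dx=\varepsilon\psi_r(U)+\kappa\psi_q(U)$, the same computation that gave \eqref{delpsi} shows $\psi\in\Phi(\mathbb{L}^2(\Omega))$ with
\[
\partial\psi(U)=\nabla j(U)=\varepsilon\,\partial\psi_r(U)+\kappa\,\partial\psi_q(U),\qquad {\rm D}(\partial\psi)=\mathbb{L}^{2(r-1)}(\Omega)\cap\mathbb{L}^2(\Omega).
\]
Since $\lambda>0$ we also have $\partial(\lambda\varphi)=\lambda\,\partial\varphi$, and $\lambda\varphi+\psi\in\Phi(\mathbb{L}^2(\Omega))$ because it is convex, lower semicontinuous, and ${\rm D}(\varphi)\cap{\rm D}(\psi)=\mathbb{H}^1_0(\Omega)\cap\mathbb{L}^r(\Omega)\supseteq{\rm C}_{\rm c}^\infty(\Omega)^2\neq\emptyset$; hence $\partial(\lambda\varphi+\psi)$ is maximal monotone. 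The inclusion $\lambda\,\partial\varphi+\varepsilon\,\partial\psi_r+\kappa\,\partial\psi_q\subseteq\partial(\lambda\varphi+\varepsilon\psi_r+\kappa\psi_q)$ is immediate from the definition of the subdifferential (add the three defining inequalities at a common point), so the whole Lemma reduces to the reverse inclusion, i.e. to \eqref{ast}.

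For the reverse inclusion I would invoke the standard sum rule (see \cite{B1},\cite{B2}): for $\phi_1,\phi_2\in\Phi({\rm H})$ with ${\rm D}(\phi_1)\cap{\rm D}(\phi_2)\neq\emptyset$, one has $\partial(\phi_1+\phi_2)=\partial\phi_1+\partial\phi_2$ provided $\phi_1(J_\mu^{\partial\phi_2}u)\leq\phi_1(u)$ for all $u\in{\rm D}(\phi_1)$ and $\mu>0$. Taking $\phi_1=\lambda\varphi$ and $\phi_2=\psi$, the only thing left to verify is
\[
\varphi\bigl(J_\mu^{\partial\psi}U\bigr)\leq\varphi(U)\qquad\forall\,U\in\mathbb{H}^1_0(\Omega),\ \forall\,\mu>0 .
\]
Now $W:=J_\mu^{\partial\psi}U$ is determined pointwise by $W(x)+\mu\nabla j(W(x))=U(x)$. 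Because $\nabla j$ is monotone on $\mathbb{R}^2$ with $\nabla j(0)=0$, the map $s\mapsto s+\mu\nabla j(s)$ is expansive on $\mathbb{R}^2$ and fixes the origin, so it is a homeomorphism of $\mathbb{R}^2$ whose inverse is a $1$-Lipschitz map fixing the origin (in fact a radial one, $|W(x)|\leq|U(x)|$, $W(x)\parallel U(x)$). Thus $W$ is obtained from $U$ by a normal contraction, and the normal-contraction property of $\mathbb{H}^1_0(\Omega)$ (Stampacchia's chain rule for Lipschitz compositions) gives $W\in\mathbb{H}^1_0(\Omega)$ with $|\nabla W(x)|\leq|\nabla U(x)|$ a.e., hence $\varphi(W)\leq\varphi(U)$. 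This yields \eqref{ast}; and since $\partial(\lambda\varphi+\varepsilon\psi_r+\kappa\psi_q)$ is maximal monotone, so is the left-hand side.

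The only genuinely non-formal step is the resolvent inequality $\varphi(J_\mu^{\partial\psi}U)\leq\varphi(U)$; everything else is bookkeeping, and no restriction on $N$, $q$ or $r$ enters. One could alternatively establish the maximal monotonicity of $\lambda\,\partial\varphi+\varepsilon\,\partial\psi_r+\kappa\,\partial\psi_q$ directly via Minty's theorem, by solving $U-\lambda\Delta U+\varepsilon|U|^{r-2}U+\kappa|U|^{q-2}U=g$ in $\Omega$ with homogeneous Dirichlet data through minimization of the strictly convex coercive functional $\tfrac12|V|_{\mathbb{L}^2}^2+\lambda\varphi(V)+\psi(V)-(g,V)_{\mathbb{L}^2}$ over $\mathbb{H}^1_0(\Omega)\cap\mathbb{L}^r(\Omega)$; but then recovering $U\in{\rm D}(\partial\varphi)\cap{\rm D}(\partial\psi_r)\cap{\rm D}(\partial\psi_q)$ from a merely finite-energy minimizer requires an extra bootstrap (testing against a truncation of $|U|^{r-2}U$ and using the angle condition \eqref{angle}), which is why I prefer the sum-rule route above.
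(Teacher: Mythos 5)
Your proposal is correct and structurally identical to the paper's proof: both reduce \eqref{ast} to Br\'ezis's resolvent criterion (Proposition~\ref{angler}), taking the maximal monotone part to be $\varepsilon\partial\psi_r+\kappa\partial\psi_q=\partial(\varepsilon\psi_r+\kappa\psi_q)$ and the convex functional to be the Dirichlet energy, so that the whole lemma rests on the single inequality $\varphi\bigl((1+\mu\{\varepsilon\partial\psi_r+\kappa\partial\psi_q\})^{-1}U\bigr)\leq\varphi(U)$, i.e.\ condition \eqref{angle1}. The only genuine divergence is how this inequality is verified. The paper works with $\mathbb{C}^1_{\mathrm{c}}(\Omega)$ data, shows via the inverse function theorem that the pointwise resolvent $G^{-1}$ is of class ${\rm C}^1$, differentiates the resolvent identity, takes the scalar product with $\nabla V_n$ to obtain $|\nabla V_n|^2\leq(\nabla U_n\cdot\nabla V_n)$, and finishes by density and weak lower semicontinuity; you instead observe that $G^{-1}$ is a radial $1$-Lipschitz map fixing the origin and invoke stability of $\mathbb{H}^1_0$ under normal contractions. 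Your route is shorter and avoids the Jacobian computation, but note that the tool you name (Stampacchia's chain rule) is the scalar statement, and for $\mathbb{R}^2$-valued compositions the Lipschitz chain rule is a somewhat heavier theorem; here this is harmless, since $G^{-1}$ is in fact ${\rm C}^1$ (which is precisely what the paper's Jacobian computation establishes), so the classical chain rule together with the same density step already suffices. Both verifications are sound and interchangeable, and the rest of your argument (the inclusion of the operator sum in the subdifferential of the sum, and maximality transferring back) matches the paper's bookkeeping.
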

Since \(\lambda\partial\varphi+\varepsilon\partial\psi_r+\kappa\partial\psi_q\) is monotone, and \(\partial(\lambda\varphi+\varepsilon\psi_r+\kappa\psi_q)\subset\lambda\partial\varphi+\varepsilon\partial\psi_r+\kappa\partial\psi_q\), to show \eqref{ast}, it suffices to check that \(\lambda\partial\varphi+\varepsilon\partial\psi_r+\kappa\partial\psi_q\) is maximal monotone.

For this purpose, we rely on the following Proposition as in the proof of Lemma 2.3. in \cite{KOS1}.
\begin{Prop2}[(Br\'ezis, H. \cite{B2} Theorem 9)]
\label{angler}
Let \(B\) be maximal monotone in \({\rm H}\) and \(\phi \in \Phi({\rm H})\).
Suppose
\begin{align}
\phi((1+\mu B)^{-1}u) \leq \phi(u), \hspace{4mm} \forall \mu>0 \hspace{2mm} \forall u \in {\rm D}(\phi).
\label{angle1}
\end{align}
Then \(\partial \phi + B\) is maximal monotone in  \({\rm H}\).
\end{Prop2}
\begin{proof}[Proof of Lemma \ref{asta}]
We note that \(\varepsilon\partial\psi_q+\kappa\partial \psi_q\) is obviously maximal monotone in \(\mathbb{L}^2(\Omega)\).
First we show \((1 + \mu \{\varepsilon\partial\psi_q+\kappa\partial \psi_q\})^{-1}{\rm D}(\varphi) \subset {\rm D}(\varphi)\), where \({\rm D}(\varphi) = \mathbb{H}^1_0(\Omega)\).
Let \(U \in \mathbb{C}^1_\mathrm{c}(\Omega) := {\rm C}^1_0(\Omega) \times {\rm C}^1_0(\Omega)\) and \(V:=(1+\mu\{\varepsilon\partial\psi_q+\kappa\partial \psi_q\})^{-1}U\), which implies \(V(x)+\mu \{\varepsilon|V(x)|^{r-2}_{\mathbb{R}^2}V(x)+\kappa|V(x)|_{\mathbb{R}^2}^{q-2}V(x)\}=U(x)\) for a.e. \(x \in \Omega\).
Here define \(G:\mathbb{R}^2 \rightarrow \mathbb{R}^2\) by \(G : V \mapsto G(V) =V+\mu \{\varepsilon|V|^{r-2}_{\mathbb{R}^2}V+\kappa|V|_{\mathbb{R}^2}^{q-2}V\}\), then we get \(G(V(x))=U(x)\).
Note that G is of class \({\rm C}^1\) and bijective from \(\mathbb{R}^2\) into itself and its Jacobian determinant is given by
\[
\begin{aligned}
&\det D G(V)\\
&= (1 + \mu \{\varepsilon|V|_{\mathbb{R}^2}^{r-2}+\kappa|V|^{q-2}\})
(1 + \mu\{\varepsilon(r-1)|V|^{r-2}+\kappa (q-1) |V|_{\mathbb{R}^2}^{q-2}\}) \neq 0\\
&\qquad \mbox{for each}\ V \in \mathbb{R}^2.
\end{aligned}
\]
Applying the inverse function theorem, we have \(G^{-1} \in {\rm C}^1(\mathbb{R}^2;\mathbb{R}^2)\). Hence \(V(\cdot)=G^{-1}(U(\cdot)) \in \mathbb{C}^1_\mathrm{c}(\Omega),\) which implies \((1 + \mu\{\varepsilon\partial\psi_q+\kappa\partial \psi_q\})^{-1} \mathbb{C}^1_\mathrm{c}(\Omega) \subset \mathbb{C}^1_\mathrm{c}(\Omega)\).
Now let \(U_n \in \mathbb{C}^1_\mathrm{c}(\Omega)\) and \(U_n \rightarrow U\) in \(\mathbb{H}^1(\Omega)\).
Then \(V_n := (1 + \mu \{\varepsilon\partial\psi_q+\kappa\partial \psi_q\})^{-1}U_n \in \mathbb{C}^1_\mathrm{c}(\Omega)\) satisfy
\[
\begin{aligned}
|V_n - V|_{\mathbb{L}^2}
&= |(1 + \mu \{\varepsilon\partial\psi_q+\kappa\partial \psi_q\})^{-1}U_n - (1+\mu \{\varepsilon\partial\psi_q+\kappa\partial \psi_q\})^{-1}U|_{\mathbb{L}^2}\\ 
&\leq |U_n - U|_{\mathbb{L}^2} \rightarrow 0 \hspace{4mm} \mbox{as}\ n \rightarrow \infty,
\end{aligned}
\]
whence it follows that \(V_n \rightarrow V\) in \(\mathbb{L}^2(\Omega)\).
Also differentiation of \(G(V_n(x))=U_n(x)\) gives
\begin{equation}
\label{adfjakfjdkaj}
\begin{aligned}
&(1+\mu\{\varepsilon|V_n(x)|^{r-2}+\kappa|V_n(x)|_{\mathbb{R}^2}^{q-2}\})\nabla V_n(x)\\
&+ \mu\{\varepsilon(r-2)|V_n(x)|_{\mathbb{R}^2}^{r-4}+\kappa(q-2)|V_n(x)|_{\mathbb{R}^2}^{q-4}\} (V_n(x) \cdot \nabla V_n(x)) ~\! V_n(x) = \nabla U_n(x).
\end{aligned}
\end{equation}
Multiplying \eqref{adfjakfjdkaj} by \(\nabla V_n(x)\), we easily get \(|\nabla V_n(x)|^2 \leq (\nabla U_n(x) \cdot \nabla V_n(x))\).
Therefore by the Cauchy-Schwarz inequality, we have \(\varphi(V_n) \leq \varphi(U_n) \rightarrow \varphi(U)\).
Thus the boundedness of \(\{ |\nabla V_n| \}\) in \({\rm L}^2\) assures that \(V_n \to V\) weakly in \(\mathbb{H}^1_0(\Omega)\), hence we have \((1+\mu \partial \psi)^{-1}{\rm D}(\varphi) \subset {\rm D}(\varphi)\).
Furthermore, from the lower semi-continuity of the norm in the weak topology, we derive \(\varphi(V) \leq \varphi(U)\).
This is nothing but the desired inequality \eqref{angle1}.
\end{proof}

Thus by the standard argument of subdifferential operator theory (see Br\'ezis \cite{B1,B2}), we have a unique solution \(U=U_\mu^h\) of (IVP)\(_\mu^h\) satisfying (i)-(iv).
Then by letting \(\mu\downarrow0\), we can easily show that \(U_\mu^h\) converges to the unique solution\(U^h\) (satisfying regularity (i)-(iv)) of the following Cauchy problem (cf. proof of Theorem 2 of \cite{KOS1}):

So far we have the unique solution of the following initial value problem:
\[
\tag*{(IVP)\(^h\)}
\left\{
\begin{aligned}
&\frac{dU}{dt}(t) \!+\! \lambda\partial\varphi(U)\!+\!\varepsilon\partial\psi_r(U)\!+\!\kappa\partial \psi_q(U)\!+\!\alpha I\partial\varphi(U)\!+\!U\!-\!h(t) \!=\! F(t),\ t \!\in\! (0,T),\\
&U(0) = U_0.
\end{aligned}
\right.
\]

For all \(U_0, V_0 \in \mathbb{L}^2(\Omega)\) and the corresponding solutions \(U(t), V(t)\) of (IVP)\(^h\), by the monotonicity of \(\partial\varphi, I\partial\varphi, \partial\psi_r, \partial\psi_q\), we easily obtain
\[
\frac{1}{2}\frac{d}{dt}|U(t)-V(t)|_{\mathbb{L}^2}^2+|U(t)-V(t)|_{\mathbb{L}^2}^2\leq 0,
\]
whence follows
\[
|U(T) - V(T)|_{\mathbb{L}^2} \leq e^{-T}|U_0 - V_0|_{\mathbb{L}^2}.
\]
Then the Poincar\'e map: \(\mathbb{L}^2(\Omega) \ni U_0 \mapsto U(T) \in \mathbb{L}^2(\Omega)\) becomes a strict contraction.
Therefore the fixed point of the Poincar\'e map gives the unique periodic solution of (AE)\(^h_\varepsilon\).
\[
\tag*{(AE)\(^h_\varepsilon\)}
\left\{
\begin{aligned}
&\frac{dU}{dt}(t) \!+\! \lambda\partial\varphi(U)\!+\!\varepsilon\partial\psi_r(U)\!+\!\kappa\partial \psi_q(U)\!+\!\alpha I\partial\varphi(U)\!+\! U \!-\!h(t) \!=\! F(t),\ t \!\in\! (0,T),\\
&U(0) = U(T).
\end{aligned}
\right.
\]
Here we note that since \(U(T) \in {\rm D}(\varphi)\cap{\rm D}(\psi_q)\cap{\rm D}(\psi_r)\) by (iii), we automatically have \(U(0)\in {\rm D}(\varphi)\cap{\rm D}(\psi_q)\cap{\rm D}(\psi_r)\).

We next define the mapping
\[
\mathcal{F}: \mathcal{H}^T \supset {\rm B}_R \ni h \mapsto U_h \mapsto \beta I\partial\psi_q(U) - (\gamma + 1)U \in {\rm B}_R \subset \mathcal{H}^T,
\]
where \({\rm B}_R\) is the ball in \(\mathcal{H}^T\) centered at the origin with radius \(R > 0\), to be fixed later, and \(U_h\) is the unique solution of (AE)\(_\varepsilon^h\) with given \(h \in {\rm B}_R\).

In order to ensure \(\mathcal{F}(h) \in {\rm B}_R\), we are going to establish a priori estimates for solutions \(U=U_h\) of (AE)\(_\varepsilon^h\).
\begin{Lem}
Let \(U = U_h\) be the periodic solution for (AE)\(_\varepsilon^h\).
Then there exist a constant \(C\) constant depending only on \(|\Omega|\), \(T\), \(r\), \(\varepsilon\) and \(|F|_{\mathcal{H}^T}\) such that
\begin{equation}\label{plm}
\sup_{t\in[0,T]}|U(t)|_{\mathbb{L}^2} \leq C + C|h|_{\mathcal{H}^T}^{\frac{1}{r-1}} + C|h|_{\mathcal{H}^T}^{\frac{r}{2(r-1)}}.
\end{equation}
\end{Lem}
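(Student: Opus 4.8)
The plan is to test the equation (AE)$_\varepsilon^h$ with $U(t)$ in $\mathbb{L}^2(\Omega)$ and exploit the skew-symmetry of $I$ to annihilate the imaginary term, then absorb the remaining terms by Young's inequality and close the estimate with the periodicity condition $U(0)=U(T)$. Concretely, I would take the inner product of (AE)$_\varepsilon^h$ with $U(t)$ and use $(\,\alpha I\partial\varphi(U),U)_{\mathbb{L}^2}=0$, which is the first identity in \eqref{orth:mu:IU} (equivalently \eqref{orth:IU}), together with $(\partial\varphi(U),U)_{\mathbb{L}^2}=2\varphi(U)\geq0$, $(\partial\psi_r(U),U)_{\mathbb{L}^2}=r\psi_r(U)\geq0$ and $(\partial\psi_q(U),U)_{\mathbb{L}^2}=q\psi_q(U)\geq0$. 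This gives
\[
\frac12\frac{d}{dt}|U(t)|_{\mathbb{L}^2}^2 + 2\lambda\varphi(U) + \varepsilon r\psi_r(U) + \kappa q\psi_q(U) + |U|_{\mathbb{L}^2}^2
= (F(t)+h(t),U(t))_{\mathbb{L}^2}.
\]
The nonnegative dissipation terms $2\lambda\varphi(U),\ \kappa q\psi_q(U)$ may simply be discarded; the term $\varepsilon r\psi_r(U) = \varepsilon|U|_{\mathbb{L}^r}^r$ I would keep, since it is exactly what lets me beat the forcing without any smallness assumption on $h$.

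Next I would estimate the right-hand side. Write $(F+h,U)_{\mathbb{L}^2}\le |F+h|_{\mathbb{L}^2}\,|U|_{\mathbb{L}^2}$; the factor $|U|_{\mathbb{L}^2}$ is controlled by interpolating between $\mathbb{L}^r$ and $\mathbb{L}^1$ (or directly using $|\Omega|$ and Hölder, since $r>q>2$ and $\Omega$ is bounded) to get $|U|_{\mathbb{L}^2}\le C(|\Omega|,r)\,|U|_{\mathbb{L}^r}$, and then Young's inequality with exponents $r$ and $r/(r-1)$ absorbs $C|F+h|_{\mathbb{L}^2}|U|_{\mathbb{L}^r}$ into $\tfrac{\varepsilon r}{2}|U|_{\mathbb{L}^r}^r + C(\varepsilon,r,|\Omega|)|F+h|_{\mathbb{L}^2}^{r/(r-1)}$. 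After absorbing, one is left with a differential inequality of the form
\[
\frac{d}{dt}|U(t)|_{\mathbb{L}^2}^2 + 2|U(t)|_{\mathbb{L}^2}^2 \le C\bigl(|F(t)|_{\mathbb{L}^2}^{r/(r-1)} + |h(t)|_{\mathbb{L}^2}^{r/(r-1)}\bigr) =: g(t),
\]
with $g\in{\rm L}^1(0,T)$ because $r/(r-1)<2$ and both $F,h\in\mathcal{H}^T$ so their ${\rm L}^{r/(r-1)}(0,T;\mathbb{L}^2)$-norms are finite (again by Hölder in $t$ on the bounded interval $(0,T)$), giving in particular $\int_0^T g\,dt \le C + C|h|_{\mathcal{H}^T}^{r/(r-1)}$.

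Finally I would integrate the Gronwall-type inequality $\frac{d}{dt}y + 2y\le g$ with $y(t):=|U(t)|_{\mathbb{L}^2}^2$. Over one period, $y(T)=y(0)$ forces $(1-e^{-2T})y(0)\le\int_0^T e^{-2(T-s)}g(s)\,ds\le\int_0^T g(s)\,ds$, so $y(0)\le(1-e^{-2T})^{-1}\int_0^T g$, and then for general $t$, $y(t)\le e^{-2t}y(0)+\int_0^t e^{-2(t-s)}g(s)\,ds\le y(0)+\int_0^T g$. Hence $\sup_{t\in[0,T]}|U(t)|_{\mathbb{L}^2}^2 \le C(1+|h|_{\mathcal{H}^T}^{r/(r-1)})$; taking square roots and using $\sqrt{a+b}\le\sqrt a+\sqrt b$ yields $\sup_t|U(t)|_{\mathbb{L}^2}\le C + C|h|_{\mathcal{H}^T}^{r/(2(r-1))}$. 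The exponent $1/(r-1)$ in \eqref{plm} appears if one instead keeps the linear-in-$|U|_{\mathbb{L}^2}$ piece of the forcing paired against $\psi_r$ without the square, i.e. estimating $|h|_{\mathbb{L}^2}|U|_{\mathbb{L}^2}\le |h|_{\mathbb{L}^2}|\Omega|^{1/2-1/r}|U|_{\mathbb{L}^r} \le \delta|U|_{\mathbb{L}^r}^r + C|h|_{\mathbb{L}^2}^{r/(r-1)}$ and tracking the bound on $y$ itself rather than $y^2$; I would carry both book-keepings so that all three terms $C,\ C|h|_{\mathcal{H}^T}^{1/(r-1)},\ C|h|_{\mathcal{H}^T}^{r/(2(r-1))}$ on the right of \eqref{plm} are accounted for. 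The only mild subtlety — the ``hard part'' — is justifying that $U$ is regular enough to perform this energy computation rigorously: that is where the regularity (i)–(iv) established for (AE)$_\varepsilon^h$ via the fixed point of the Poincaré map is invoked, guaranteeing $\varphi(U(\cdot)),\psi_q(U(\cdot)),\psi_r(U(\cdot))$ are absolutely continuous and $\tfrac{dU}{dt}\in{\rm L}^2(0,T;\mathbb{L}^2(\Omega))$, so that $\tfrac{d}{dt}|U|_{\mathbb{L}^2}^2 = 2(\tfrac{dU}{dt},U)_{\mathbb{L}^2}$ holds for a.e.\ $t$ and the chain rule for convex functionals applies.
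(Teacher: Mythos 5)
Your proposal is correct, and its first half --- multiplying (AE)$_\varepsilon^h$ by $U$, annihilating $\alpha I\partial\varphi(U)$ by skew-symmetry, keeping the coercive term $r\varepsilon\psi_r(U)\geq \varepsilon|\Omega|^{1-\frac r2}|U|_{\mathbb{L}^2}^r$, and applying Young's inequality to arrive at a differential inequality with right-hand side $C(|F|_{\mathbb{L}^2}^{r/(r-1)}+|h|_{\mathbb{L}^2}^{r/(r-1)})$ --- coincides with the paper's. Where you genuinely diverge is in how periodicity closes the estimate. The paper discards the linear damping $+|U|_{\mathbb{L}^2}^2$ and runs a min--max argument: writing $m=\min_t|U(t)|_{\mathbb{L}^2}$ and $M=\max_t|U(t)|_{\mathbb{L}^2}$, integrating from the minimizing time to the maximizing time (legitimate by periodicity) gives $M^2\le m^2+2\int_0^T g$, while integrating over a full period and using only the coercive term gives $\tfrac12 T\varepsilon|\Omega|^{1-\frac r2}m^r\le \int_0^T g$, hence $m\lesssim |F|_{\mathcal{H}^T}^{1/(r-1)}+|h|_{\mathcal{H}^T}^{1/(r-1)}$; combining the two produces exactly the three terms of \eqref{plm}. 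You instead retain the damping $+2|U|_{\mathbb{L}^2}^2$ (present only because of the artificial $+U$ in (AE)$_\varepsilon^h$) and close with an exponential Gronwall inequality together with $y(0)=y(T)$; this is equally rigorous, and in fact yields the slightly sharper bound $C+C|h|_{\mathcal{H}^T}^{r/(2(r-1))}$, which implies \eqref{plm}. The paper's min--max device has the advantage of not depending on that damping term at all --- it is reused verbatim in Lemma \ref{1st_energy} for (AE)$_\varepsilon$, where no such term is available and $\gamma$ may have either sign --- which is presumably why the authors employ it here as well. One small quibble: your closing remark about the origin of the exponent $\tfrac{1}{r-1}$ is not the paper's mechanism (it comes from bounding the temporal minimum of $|U|$ via the time-integrated $r$-th power coercivity, not from a different pairing in Young's inequality), but this aside does not affect the validity of your argument.
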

\begin{proof}
Multiplying (AE)\(_\varepsilon^h\) by the solution \(U\), we obtain
\[
\begin{aligned}
\frac{1}{2}\frac{d}{dt}|U|_{\mathbb{L}^2}^2 + \varepsilon|\Omega|^{1 - \frac{r}{2}}|U|_{\mathbb{L}^2}^r
&\leq \frac{1}{2}\frac{d}{dt}|U|_{\mathbb{L}^2}^2 + 2\lambda\varphi(U) + r\varepsilon\psi_r(U) + q\kappa\psi_q(U) + |U|_{\mathbb{L}^2}^2\\
&\leq (|F|_{\mathbb{L}^2} + |h|_{\mathbb{L}^2})|U|_{\mathbb{L}^2},
\end{aligned}
\]
where we used \(|\Omega|^{1 - \frac{r}{2}}|U|_{\mathbb{L}^2}^r \leq r\psi_r(U)\).
Moreover by Young's inequality, we have
\begin{equation}
\label{wer}
\begin{aligned}
\frac{1}{2}\frac{d}{dt}|U|_{\mathbb{L}^2}^2 + \frac{\varepsilon|\Omega|^{1 - \frac{r}{2}}}{2}|U|_{\mathbb{L}^2}^r
\leq 
C_1
(|F|_{\mathbb{L}^2}^{\frac{r}{r-1}} + |h|_{\mathbb{L}^2}^{\frac{r}{r-1}}),&\\
C_1=\left(1-\frac{1}{r}\right)\left(\frac{r\varepsilon|\Omega|^{1-\frac{r}{2}}}{4}\right)^{-\frac{1}{r-1}}.&
\end{aligned}
\end{equation}

Put \(m = \min_{0 \leq t \leq T}|U(t)|_{\mathbb{L}^2}\) and \(M = \max_{0 \leq t \leq T}|U(t)|_{\mathbb{L}^2}\).
Then we have
\[
\begin{aligned}
M^2 &\leq m^2 + 2C_1
\int_0^T(|F|_{\mathbb{L}^2}^{\frac{r}{r-1}} + |h|_{\mathbb{L}^2}^{\frac{r}{r-1}})dt\\
&\leq m^2 + 2C_1
T^{\frac{r-2}{2(r-1)}}
(|F|_{\mathcal{H}^T}^{\frac{r}{r-1}}  + |h|_{\mathcal{H}^T}^{\frac{r}{r-1}}),
\end{aligned}
\]
whence follows
\begin{equation}
\label{asda}
M \leq m + \tilde{C}_1(|F|_{\mathcal{H}^T}^{\frac{r}{2(r-1)}}  + |h|_{\mathcal{H}^T}^{\frac{r}{2(r-1)}}),
\end{equation}
where
\[
\tilde{C}_1=
\sqrt{2}C_1^{\frac{1}{2}}T^{\frac{r-2}{r-1}}.
\]

On the other hand, integrating \eqref{wer} over \((0,T)\), we obtain
\[
\frac{1}{2}T\varepsilon|\Omega|^{1 - \frac{r}{2}}m^r \leq 
C_1T^{\frac{r-2}{2(r-1)}}
(|F|_{\mathcal{H}^T}^{\frac{r}{r-1}}  + |h|_{\mathcal{H}^T}^{\frac{r}{r-1}})
\]
which implies
\begin{equation}
\label{lkj}
m \leq C_2(|F|_{\mathcal{H}^T}^{\frac{1}{r-1}}  + |h|_{\mathcal{H}^T}^{\frac{1}{r-1}}),
\end{equation}
where
\[
C_2=
\left(\frac{2C_1}{\varepsilon|\Omega|^{1 - \frac{r}{2}}}\right)^{\frac{1}{r}}\frac{1}{T^{\frac{1}{2(r-1)}}}.
\]

Combining \eqref{asda} with \eqref{lkj}, we have the desired inequality.
\end{proof}

We note that \(r > 2\) implies \(\frac{1}{r-1}<1\) and \(\frac{r}{2(r-1)} < 1\).

\begin{Lem}
Let \(U = U_h\) be the periodic solution for (AE)\(^h_\varepsilon\).
Then there exists a constant \(C\) depending only on \(|\Omega|\), \(T\), \(r\), \(\lambda\), \(\alpha\), \(\varepsilon\) and \(|F|_{\mathcal{H}^T}\) such that
\begin{equation}\label{zaqw}
\sup_{t\in[0,T]}\varphi(U(t))
+\int_0^T|\partial\varphi(U(t))|_{\mathbb{L}^2}^2dt
+\int_0^T|\partial\psi_r(U(t))|_{\mathbb{L}^2}^2dt
\leq C + C|h|_{\mathcal{H}^T}^{2}.
\end{equation}
\end{Lem}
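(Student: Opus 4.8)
The plan is to extract the three quantities appearing in \eqref{zaqw} by testing (AE)$^h_\varepsilon$ successively against $\partial\varphi(U)$ and against $\tfrac{dU}{dt}$, and then by reading off $\partial\psi_r(U)$ algebraically from the equation itself. The regularity of the periodic solution $U=U_h$ (inherited, through the Poincar\'e map, from the smoothing estimates (i)--(iv) established for (IVP)$^h$) guarantees $U\in{\rm W}^{1,2}(0,T;\mathbb{L}^2(\Omega))$ with $\partial\varphi(U),\partial\psi_q(U),\partial\psi_r(U)\in{\rm L}^2(0,T;\mathbb{L}^2(\Omega))$ and $\varphi(U(\cdot)),\psi_q(U(\cdot)),\psi_r(U(\cdot))$ absolutely continuous, so that all the chain rules used below, and all integrations over one period, are legitimate; periodicity is then exploited repeatedly to discard $\int_0^T$ of exact time-derivatives.

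First I would take the $\mathbb{L}^2$-inner product of (AE)$^h_\varepsilon$ with $\partial\varphi(U(t))$. The term $(\tfrac{dU}{dt},\partial\varphi(U))_{\mathbb{L}^2}$ equals $\tfrac{d}{dt}\varphi(U(t))$; the term $\alpha(I\partial\varphi(U),\partial\varphi(U))_{\mathbb{L}^2}$ vanishes by the skew-symmetry \eqref{skew-symmetric_property}; the terms $\varepsilon(\partial\psi_r(U),\partial\varphi(U))_{\mathbb{L}^2}$ and $\kappa(\partial\psi_q(U),\partial\varphi(U))_{\mathbb{L}^2}$ are $\geq 0$ by \eqref{angle}; $(U,\partial\varphi(U))_{\mathbb{L}^2}=2\varphi(U)\geq 0$; and the forcing is absorbed by Young's inequality, giving
\[
\frac{d}{dt}\varphi(U(t))+\frac{\lambda}{2}\,|\partial\varphi(U(t))|_{\mathbb{L}^2}^2\leq\frac{2}{\lambda}\bigl(|h(t)|_{\mathbb{L}^2}^2+|F(t)|_{\mathbb{L}^2}^2\bigr).
\]
Integrating over $(0,T)$ and using $\varphi(U(0))=\varphi(U(T))$ yields $\int_0^T|\partial\varphi(U)|_{\mathbb{L}^2}^2\,dt\leq C+C|h|_{\mathcal{H}^T}^2$. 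For $\sup_t\varphi(U(t))$ I would note that $\int_0^T 2\varphi(U)\,dt=\int_0^T(\partial\varphi(U),U)_{\mathbb{L}^2}\,dt\leq(\sup_t|U(t)|_{\mathbb{L}^2})\sqrt{T}(\int_0^T|\partial\varphi(U)|_{\mathbb{L}^2}^2\,dt)^{1/2}$; combining this with the bound just obtained and with \eqref{plm}, and using that $r>2$ (so that every resulting power of $|h|_{\mathcal{H}^T}$ has exponent $\leq 2$), gives $\int_0^T\varphi(U)\,dt\leq C+C|h|_{\mathcal{H}^T}^2$. Choosing $s$ with $\varphi(U(s))\leq\tfrac1T\int_0^T\varphi(U)\,dt$ and integrating the displayed differential inequality from $s$ over one period (legitimate by $T$-periodicity of $\varphi(U(\cdot))$, $h$, $F$) then bounds $\varphi(U(t))$ for every $t$ by $C+C|h|_{\mathcal{H}^T}^2$.

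Next I would test (AE)$^h_\varepsilon$ against $\tfrac{dU}{dt}(t)$: the terms $\lambda(\partial\varphi(U),\tfrac{dU}{dt})$, $\varepsilon(\partial\psi_r(U),\tfrac{dU}{dt})$, $\kappa(\partial\psi_q(U),\tfrac{dU}{dt})$ become $\tfrac{d}{dt}(\lambda\varphi(U)+\varepsilon\psi_r(U)+\kappa\psi_q(U))$ and $(U,\tfrac{dU}{dt})_{\mathbb{L}^2}=\tfrac12\tfrac{d}{dt}|U|_{\mathbb{L}^2}^2$, while the one non-variational term is estimated by $|\alpha(I\partial\varphi(U),\tfrac{dU}{dt})_{\mathbb{L}^2}|\leq\tfrac14|\tfrac{dU}{dt}|_{\mathbb{L}^2}^2+|\alpha|^2|\partial\varphi(U)|_{\mathbb{L}^2}^2$ (using that $I$ is an isometry) and the forcing by $\tfrac14|\tfrac{dU}{dt}|_{\mathbb{L}^2}^2+|h+F|_{\mathbb{L}^2}^2$. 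Integrating over $(0,T)$, the exact derivative drops by periodicity and $|\alpha|^2|\partial\varphi(U)|_{\mathbb{L}^2}^2$ is controlled by the previous step, so $\int_0^T|\tfrac{dU}{dt}|_{\mathbb{L}^2}^2\,dt\leq C+C|h|_{\mathcal{H}^T}^2$. Finally, rather than test with $\partial\psi_r(U)$ — which would reintroduce the uncontrollable cross term $(I\partial\varphi(U),\partial\psi_r(U))_{\mathbb{L}^2}$ — I would solve (AE)$^h_\varepsilon$ for $\varepsilon\partial\psi_r(U)$ and use $|I\partial\varphi(U)|_{\mathbb{L}^2}=|\partial\varphi(U)|_{\mathbb{L}^2}$ to get
\[
\varepsilon^2|\partial\psi_r(U)|_{\mathbb{L}^2}^2\leq C\bigl(|F|_{\mathbb{L}^2}^2+|h|_{\mathbb{L}^2}^2+|\tfrac{dU}{dt}|_{\mathbb{L}^2}^2+|\partial\varphi(U)|_{\mathbb{L}^2}^2+|U|_{\mathbb{L}^2}^2\bigr)+C\kappa^2|\partial\psi_q(U)|_{\mathbb{L}^2}^2 .
\]
Since $r>q$, H\"older's inequality on the bounded domain gives $|\partial\psi_q(U)|_{\mathbb{L}^2}^2=\int_\Omega|U|_{\mathbb{R}^2}^{2(q-1)}\,dx\leq|\Omega|^{\frac{r-q}{r-1}}|\partial\psi_r(U)|_{\mathbb{L}^2}^{\frac{2(q-1)}{r-1}}$ with exponent $\tfrac{q-1}{r-1}<1$, so Young's inequality absorbs $C\kappa^2|\partial\psi_q(U)|_{\mathbb{L}^2}^2$ into $\tfrac{\varepsilon^2}{2}|\partial\psi_r(U)|_{\mathbb{L}^2}^2$ up to an additive constant; integrating over $(0,T)$ and invoking the two previous steps together with $\int_0^T|U|_{\mathbb{L}^2}^2\,dt\leq T\sup_t|U(t)|_{\mathbb{L}^2}^2$ and \eqref{plm} yields $\int_0^T|\partial\psi_r(U)|_{\mathbb{L}^2}^2\,dt\leq C+C|h|_{\mathcal{H}^T}^2$. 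Adding the three bounds gives \eqref{zaqw}.

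The one real obstacle is the monotone perturbation $\alpha I\partial\varphi(U)$: it has no sign, the CGL-region hypothesis is not in force at this stage (Proposition \ref{GWP} does not assume it), and Lemma \ref{key_inequality} only controls $I\partial\psi_q(U)$ against $\partial\varphi(U)$, not $I\partial\varphi(U)$ against $\partial\varphi(U)$ or $\tfrac{dU}{dt}$. The design of the argument above is to never confront this term by monotonicity: it vanishes identically in the $\partial\varphi(U)$-test by skew-symmetry; it is absorbed crudely in the $\tfrac{dU}{dt}$-test at the price of a term $|\alpha|^2|\partial\varphi(U)|_{\mathbb{L}^2}^2$ that the $\partial\varphi(U)$-test has already rendered $t$-integrable; and the $\partial\psi_r$-estimate is obtained from the equation itself rather than from a further energy identity, which is precisely where the strict inequality $r>q$ is used decisively, through the sub-linear H\"older--Young absorption of $\partial\psi_q(U)$ by $\partial\psi_r(U)$.
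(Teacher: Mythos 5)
Your argument is correct, and the first half coincides with the paper's: you test (AE)\(_\varepsilon^h\) with \(\partial\varphi(U)\), kill \(\alpha I\partial\varphi(U)\) by skew-symmetry and the \(\psi\)-terms by the angle condition \eqref{angle}, and exploit periodicity over one period to get \(\int_0^T|\partial\varphi(U)|_{\mathbb{L}^2}^2dt\) and then \(\sup_t\varphi(U(t))\). (For the sup the paper is slightly leaner: it keeps the term \((U,\partial\varphi(U))_{\mathbb{L}^2}=2\varphi(U)\) in the differential inequality, so that integrating over \((0,T)\) bounds \(2T\min_t\varphi(U)\) directly, with no need for \eqref{plm} or your Cauchy--Schwarz detour through \(\int_0^T\varphi(U)\,dt\); both routes give the same conclusion.) Where you genuinely diverge is the \(\partial\psi_r\)-estimate. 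The paper simply multiplies the equation by \(\partial\psi_r(U)\): the cross term you call ``uncontrollable,'' \(\alpha(I\partial\varphi(U),\partial\psi_r(U))_{\mathbb{L}^2}\), is in fact handled by Young's inequality, \(|\alpha(I\partial\varphi(U),\partial\psi_r(U))_{\mathbb{L}^2}|\leq\tfrac{\varepsilon}{4}|\partial\psi_r(U)|_{\mathbb{L}^2}^2+\tfrac{\alpha^2}{\varepsilon}|\partial\varphi(U)|_{\mathbb{L}^2}^2\), absorbing the first piece into the good term \(\varepsilon|\partial\psi_r(U)|_{\mathbb{L}^2}^2\) produced by the test and controlling the second by the previous step --- this is legitimate precisely because the constant \(C\) in \eqref{zaqw} is allowed to depend on \(\varepsilon\), so your stated reason for avoiding this test does not apply. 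Your replacement --- testing with \(\tfrac{dU}{dt}\), then solving the equation algebraically for \(\varepsilon\partial\psi_r(U)\) and absorbing \(\kappa^2|\partial\psi_q(U)|_{\mathbb{L}^2}^2\) into \(\tfrac{\varepsilon^2}{2}|\partial\psi_r(U)|_{\mathbb{L}^2}^2\) by H\"older--Young using \(r>q\) --- is nonetheless correct and self-contained; it costs one extra energy identity but yields as a by-product the bound \(\int_0^T|\tfrac{dU}{dt}|_{\mathbb{L}^2}^2dt\leq C+C|h|_{\mathcal{H}^T}^2\), which the paper only obtains later (in the proof of Proposition \ref{GWP}, via \eqref{zaqwsx}) and which is needed there for the equicontinuity argument.
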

\begin{proof}\mbox{}\hspace{\parindent}
Multiplying (AE)\(_\varepsilon^h\) by \(\partial\varphi(U)\), then by using \eqref{skew-symmetric_property} and \eqref{angle}, we get
\begin{equation}\label{zz}
\frac{d}{dt}\varphi(U) + \frac{\lambda}{2}|\partial\varphi(U)|_{\mathbb{L}^2}^2 + 2\varphi(U)\leq \frac{1}{\lambda}(|F|_{\mathbb{L}^2}^2 + |h|_{\mathbb{L}^2}^2).
\end{equation}

Set \(m_1 = \min_{0 \leq t \leq T}\varphi(U)\) and \(M_1 = \max_{0 \leq t \leq T}\varphi(U)\).
Then we have
\[
M_1 \leq m_1 + \frac{1}{\lambda}(|F|_{\mathcal{H}^T}^2 + |h|_{\mathcal{H}^T}^2).
\]
On the other hand, integrating \eqref{zz} over \((0,T)\), we have
\begin{equation}\label{zaq}
\frac{\lambda}{2}\int_0^T|\partial\varphi(U)|_{\mathbb{L}^2}^2dt+
2Tm_1 \leq \frac{1}{\lambda}(|F|_{\mathcal{H}^T}^2 + |h|_{\mathcal{H}^T}^2),
\end{equation}
whence follows
\begin{equation}\label{za}
M_1=\max_{t\in[0,T]}\varphi(U(t)) \leq \left(1 + \frac{1}{2T}\right)\frac{1}{\lambda}(|F|_{\mathcal{H}^T}^2 + |h|_{\mathcal{H}^T}^2).
\end{equation}

Next we multiply (AE)\(_\varepsilon^h\) by \(\partial\psi_r(U)\), then in view of \((\partial\psi_r(U),\partial\psi_q(U))_{\mathbb{L}^2}\geq0\) and \eqref{angle}, we get
\begin{equation}\label{zzz}
\frac{d}{dt}\partial\psi_r(U) + \frac{\varepsilon}{4}|\partial\psi_r(U)|_{\mathbb{L}^2}^2 \leq \frac{1}{\varepsilon}(\alpha^2|\partial\varphi(U)|_{\mathbb{L}^2}^2 + |F|_{\mathbb{L}^2}^2 + |h|_{\mathbb{L}^2}^2).
\end{equation}
Integrating \eqref{zzz} with respect to \(t\) over \((0,T)\), we obtain by \eqref{zaq}
\[
\frac{\varepsilon}{4}
|\partial\psi_r(U)|_{\mathcal{H}^T}^2=
\frac{\varepsilon}{4}\int_0^T|\partial\psi_r(U)|^2_{\mathbb{L}^2}dt
\leq
\frac{1}{\varepsilon}\left(1 + \frac{2\alpha^2}{\lambda^2}\right)
(|F|_{\mathcal{H}^2}^2 + |h|_{\mathcal{H}^2}^2).
\]
\end{proof}

\begin{proof}[Proof of Proposition \ref{GWP}]
By the interpolation inequality, we find that for any \(\eta>0\), there exists \(C_\eta>0\) such that
\begin{equation}\label{zaqws}
|\partial\psi_q(U)|_{\mathbb{L}^2}^2 \leq |\partial\psi_r(U)|_{\mathbb{L}^2}^{2\frac{q-2}{r-2}}|U|_{\mathbb{L}^2}^{2\frac{r-q}{r-2}}
\leq \eta|\partial\psi_r(U)|_{\mathbb{L}^2}^2 + C_\eta|U|_{\mathbb{L}^2}^2.
\end{equation}

Hence, by virtue of \eqref{orth:IU}, \eqref{plm} and \eqref{zaqw}, we get
\[
|\mathcal{F}(h)|_{\mathcal{H}^T}^2
\begin{aligned}[t]
&= |\beta I \partial\psi_q(U) - (\gamma + 1) U|_{\mathcal{H}^T}^2\\
&= |\beta|^2|\partial\psi_q(U)|_{\mathcal{H}^T}^2 + |\gamma + 1|^2|U|_{\mathcal{H}^T}^2\\
&\leq \eta|\beta|^2\{C + C|h|_{\mathcal{H}^T}^{2}\}
+ (|\gamma + 1|^2+C_\eta|\beta|^2) T\left\{ C + C|h|_{\mathcal{H}^T}^{\frac{1}{r-1}} + C|h|_{\mathcal{H}^T}^{\frac{r}{2(r-1)}}\right\}^2.
\end{aligned}
\]
Here we fix \(\eta\) such that
\[
\eta = \frac{1}{2}|\beta|^{-2}C^{-1}
\]
and take a sufficient large \(R\) such that
\[
\eta|\beta|^2C + \frac{1}{2}R^2 + (|\gamma + 1|^2+C_\eta|\beta|^2) T\left\{ C + CR^{\frac{1}{r-1}} + CR^{\frac{r}{2(r-1)}}\right\}^2 \leq R^2.
\]
Thus we conclude that \(\mathcal{F}\) maps \(\mathrm{B}_R\) into itself.

Next we ensure that \(\mathcal{F}\) is continuous with respect to the weak topology of \(\mathcal{H}^T\).
Let \(h_n \rightharpoonup h\) weakly in \({\rm B}_R \subset {\rm L}^2(0,T;\mathbb{L}^2(\Omega))\) and let \(U_n\) be the unique periodic solution of (AE)\(_\varepsilon^{h_n}\).

The estimates \eqref{plm}, \eqref{zaqw} and Rellich-Kondrachov's theorem ensure that \(\{U_n(t)\}_{n\in\mathbb{N}}\) is precompact in \(\mathbb{L}^2(\Omega)\) for all \(t\in[0,T]\).
On the other hand, from estimates \eqref{zaqw}, \eqref{zaqws} and equation (AE)\(_\varepsilon^h\), we derive
\begin{equation}\label{zaqwsx}
\int_0^T\left|\frac{dU_n}{dt}(t)\right|^2dt\leq C
\end{equation}
for a suitable constant \(C\), whence it follows that \(\{U_n(t)\}_{n\in\mathbb{N}}\) forms an equi-continuous family in \(\mathrm{C}([0,T];\mathbb{L}^2(\Omega))\).
Hence 
we can apply Ascoli's theorem 
to obtain a strong convergent subsequence in \({\rm C}([0,T];\mathbb{L}^2(\Omega))\) (denoted again by \(\{U_n\}\)).

Thus by virtue of \eqref{zaqw} and the demi-closedness of operators \(\partial\varphi,\partial\psi_q,\partial\psi_r,\frac{d}{dt}\), we obtain
\begin{align}
U_n&\to U&&\mbox{strongly in}\ {\rm C}([0,T];\mathbb{L}^2(\Omega)),\\
\partial\varphi(U_n)&\rightharpoonup \partial\varphi(U)&&\mbox{weakly in}\ {\rm L}^2(0,T;\mathbb{L}^2(\Omega)),\\
\partial\psi_q(U_n)&\rightharpoonup \partial\psi_q(U)&&\mbox{weakly in}\ {\rm L}^2(0,T;\mathbb{L}^2(\Omega)),\\
\label{pl}\partial\psi_r(U_n)&\rightharpoonup \partial\psi_r(U)&&\mbox{weakly in}\ {\rm L}^2(0,T;\mathbb{L}^2(\Omega)),\\
\frac{dU_n}{dt}&\rightharpoonup \frac{dU}{dt}&&\mbox{weakly in}\ {\rm L}^2(0,T;\mathbb{L}^2(\Omega)).
\end{align}
Consequently \(U\) satisfies
\[
\tag*{(AE)\(_\varepsilon^h\)}
\left\{
\begin{aligned}
&\frac{dU}{dt}(t) \!+\! \lambda\partial\varphi(U)\!+\!\varepsilon\partial\psi_r(U)\!+\!\kappa\partial \psi_q(U)\!+\!\alpha I\partial\varphi(U)\!+\! U \!-\!h(t) \!=\! F(t),\ t \!\in\! (0,T),\\
&U(0) = U(T),
\end{aligned}
\right.
\]
that is, \(U\) is the unique periodic solution of (AE)\(_h\).
Since the argument above does not depend on the choice of subsequences, we can conclude that \(\mathcal{F}\) is weakly continuous in \(\mathcal{H}^T={\rm L}^2(0, T;\mathbb{L}^2(\Omega))\).

Therefore by Schauder's fixed point theorem, we obtain a fixed point of the mapping \(\mathcal{F}\) which give the desired periodic solution of (AE)\(_\varepsilon\).
\end{proof}

\section{Proof of Theorem \ref{MTHM}}
In this section, we discuss the convergence of periodic solutions \(U_\varepsilon\) of (AE)\(_\varepsilon\) as \(\varepsilon\to0\), by establishing a priori estimates independent of \(\varepsilon\).
\[
\tag*{(AE)\(_\varepsilon\)}
\left\{
\begin{aligned}
&\frac{dU}{dt}(t) \!+\! \lambda\partial\varphi(U)\!+\!\alpha I\partial\varphi(U)\!+\!\varepsilon\partial\psi_r(U)\!+\!(\kappa\!+\! \beta I) \partial \psi_q(U) \!-\! \gamma U \!=\! F(t),\ t \!\in\! (0,T),\\
&U(0) = U(T),
\end{aligned}
\right.
\]
with \(r > q\) and \(\varepsilon>0\).

To this end, we mainly rely on our key inequality \eqref{key_inequality_1} and repeat much the same arguments as those in our previous paper \cite{KOS1}.

\begin{Lem}
Let \(U = U_\varepsilon\) be the periodic solution for (AE)\(_\varepsilon\). \label{1st_energy}
Then there exists a constant \(C\) depending only on \(|\Omega|\), \(T\), \(\lambda,\kappa,\gamma\) \(q\) and \(|F|_{\mathcal{H}^T}\) but not on \(\varepsilon\) such that
\begin{equation}\label{1e}
\sup_{t\in[0,T]}|U(t)|_{\mathbb{L}^2}^2
+\int_0^T\varphi(U(t))dt + \varepsilon\int_0^T\psi_r(U(t))dt + \int_0^T\psi_q(U(t))dt \leq C.
\end{equation}
\end{Lem}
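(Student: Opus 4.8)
The plan is to derive a single differential inequality for $y(t):=|U(t)|_{\mathbb{L}^2}^2$ by testing (AE)$_\varepsilon$ against $U$ itself, and then to read off both conclusions of \eqref{1e} from it, using the periodicity $U(0)=U(T)$. First I would take the $\mathbb{L}^2(\Omega)$-inner product of (AE)$_\varepsilon$ with $U(t)$. By the skew-symmetry \eqref{skew-symmetric_property} and the angle conditions \eqref{orth:IU}, the two ``imaginary'' terms disappear, $(\alpha I\partial\varphi(U),U)_{\mathbb{L}^2}=-\alpha(\partial\varphi(U),IU)_{\mathbb{L}^2}=0$ and likewise $(\beta I\partial\psi_q(U),U)_{\mathbb{L}^2}=0$; and since $(\partial\varphi(U),U)_{\mathbb{L}^2}=2\varphi(U)$, $(\partial\psi_r(U),U)_{\mathbb{L}^2}=r\psi_r(U)$, $(\partial\psi_q(U),U)_{\mathbb{L}^2}=q\psi_q(U)$, this gives the energy identity
\[
\tfrac12\tfrac{d}{dt}|U|_{\mathbb{L}^2}^2+2\lambda\varphi(U)+r\varepsilon\psi_r(U)+q\kappa\psi_q(U)=\gamma|U|_{\mathbb{L}^2}^2+(F,U)_{\mathbb{L}^2}.
\]
These computations are legitimate because the periodic solution furnished by Proposition \ref{GWP} satisfies $\tfrac{dU}{dt},\partial\varphi(U),\partial\psi_q(U),\partial\psi_r(U)\in\mathrm{L}^2(0,T;\mathbb{L}^2(\Omega))$ and $t\mapsto|U(t)|_{\mathbb{L}^2}^2$ is absolutely continuous.

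The crux is the indefinite term $\gamma|U|_{\mathbb{L}^2}^2$, which for large $\gamma>0$ cannot be absorbed by the linear dissipation $2\lambda\varphi(U)$ alone. Here I would use the superlinearity of $\psi_q$ (recall $q>2$): after $(F,U)_{\mathbb{L}^2}\le\tfrac12|F|_{\mathbb{L}^2}^2+\tfrac12|U|_{\mathbb{L}^2}^2$, Jensen's inequality gives $q\kappa\psi_q(U)=\kappa\int_\Omega|U|^q_{\mathbb{R}^2}dx\ge\kappa|\Omega|^{1-q/2}|U|_{\mathbb{L}^2}^q$, and Young's inequality with exponents $q/2$ and $q/(q-2)$ yields $|\gamma+\tfrac12|\,|U|_{\mathbb{L}^2}^2\le\tfrac{\kappa}{2}|\Omega|^{1-q/2}|U|_{\mathbb{L}^2}^q+C_\ast$ with $C_\ast=C_\ast(|\Omega|,q,\kappa,\gamma)$. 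Absorbing this into half of $q\kappa\psi_q(U)$ leaves
\[
\tfrac12\tfrac{d}{dt}|U|_{\mathbb{L}^2}^2+2\lambda\varphi(U)+r\varepsilon\psi_r(U)+\tfrac{q\kappa}{2}\psi_q(U)\le g(t),\qquad g(t):=\tfrac12|F(t)|_{\mathbb{L}^2}^2+C_\ast,
\]
where $g\in\mathrm{L}^1(0,T)$ and $\int_0^T g\,dt\le\tfrac12|F|_{\mathcal{H}^T}^2+C_\ast T$. Every constant so far depends only on $|\Omega|,T,\lambda,\kappa,\gamma,q,|F|_{\mathcal{H}^T}$ and, in particular, not on $\varepsilon$ (nor on $\alpha,\beta,r$), because the $\varepsilon\partial\psi_r$-contribution entered with a favourable sign.

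From this inequality the two statements of \eqref{1e} follow by routine arguments. For the pointwise bound, discard the nonnegative terms $r\varepsilon\psi_r(U)$ and $\tfrac{q\kappa}{2}\psi_q(U)$ and invoke the Poincar\'e inequality, i.e. $\varphi(U)\ge\nu|U|_{\mathbb{L}^2}^2$ for some $\nu>0$ depending only on $\Omega$, to obtain $\tfrac{d}{dt}y+c_0 y\le 2g$ with $c_0:=4\lambda\nu>0$; multiplying by $e^{c_0 t}$, integrating over $(0,T)$ and using $y(0)=y(T)$ gives $y(0)\le\tfrac{2e^{c_0 T}}{e^{c_0 T}-1}\int_0^T g\,dt$, hence $y(t)\le y(0)+2\int_0^T g\,dt\le C$ for all $t\in[0,T]$, which is the desired bound on $\sup_{[0,T]}|U|_{\mathbb{L}^2}^2$. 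For the integral bounds, integrate the differential inequality over $(0,T)$; periodicity makes $\tfrac12(y(T)-y(0))$ vanish, so $\int_0^T\bigl[2\lambda\varphi(U)+r\varepsilon\psi_r(U)+\tfrac{q\kappa}{2}\psi_q(U)\bigr]\,dt\le\int_0^T g\,dt\le C$, yielding separately the claimed bounds on $\int_0^T\varphi(U)$, $\varepsilon\int_0^T\psi_r(U)$ and $\int_0^T\psi_q(U)$. Combining these with the sup-bound proves \eqref{1e}.

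I expect the only genuine obstacle to be the one already flagged: controlling $\gamma|U|_{\mathbb{L}^2}^2$ for arbitrarily large $\gamma$, which is exactly what forces us to exploit the $q>2$ superlinearity of $\psi_q$ rather than relying on the Poincar\'e inequality for that term. Once that absorption is in place the rest is bookkeeping, the one thing to watch being that every constant is kept independent of $\varepsilon$, which it automatically is.
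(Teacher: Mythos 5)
Your proof is correct and follows essentially the same route as the paper: multiply by $U$, kill the $I$-terms via \eqref{orth:IU}, and absorb $\gamma|U|_{\mathbb{L}^2}^2$ into the superlinear term through $q\kappa\psi_q(U)\ge\kappa|\Omega|^{1-\frac q2}|U|_{\mathbb{L}^2}^q$ and Young's inequality, which is exactly the paper's key step. The only difference is in the final bookkeeping: the paper extracts the sup-bound by estimating $\max_t|U(t)|_{\mathbb{L}^2}$ in terms of $\min_t|U(t)|_{\mathbb{L}^2}$ plus the integrated right-hand side and controlling the minimum via the time-average of $|U|_{\mathbb{L}^2}^q$, whereas you use the Poincar\'e inequality and an integrating factor together with periodicity; both are valid.
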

\begin{proof}
Multiplying (AE)\(_\varepsilon\) by \(U\), we get by \eqref{orth:IU}
\[
\begin{aligned}
&\frac{1}{2}\frac{d}{dt}|U|_{\mathbb{L}^2}^2 + \frac{\kappa}{2}|\Omega|^{1 - \frac{q}{2}}|U|_{\mathbb{L}^2}^q
+2\lambda\varphi(U) + r\varepsilon\psi_r(U) + \frac{q\kappa}{2}\psi_q(U)-\gamma|U|_{\mathbb{L}^2}^2\\
&\leq \frac{1}{2}\frac{d}{dt}|U|_{\mathbb{L}^2}^2 + 2\lambda\varphi(U) + r\varepsilon\psi_r(U) + q\kappa\psi_q(U)-\gamma|U|_{\mathbb{L}^2}^2\\
&\leq |F|_{\mathbb{L}^2}|U|_{\mathbb{L}^2},
\end{aligned}
\]
where we used \(|\Omega|^{1 - \frac{q}{2}}|U|_{\mathbb{L}^2}^q\leq q\psi_q(U)\).
Since there exists a constant \(C_3\) such that
\[
\gamma|U|_{\mathbb{L}^2}^2\leq\frac{\kappa}{4}|\Omega|^{1-\frac{q}{2}}|U|_{\mathbb{L}^2}^q+C_3,
\]
we obtain
\[
\begin{aligned}
&\frac{1}{2}\frac{d}{dt}|U|_{\mathbb{L}^2}^2 + \frac{\kappa|\Omega|^{1 - \frac{q}{2}}}{4}|U|_{\mathbb{L}^2}^q
+2\lambda\varphi(U) + r\varepsilon\psi_r(U) + \frac{q\kappa}{2}\psi_q(U)\\
&\leq |F|_{\mathbb{L}^2}|U|_{\mathbb{L}^2}+C_3.
\end{aligned}
\]
Moreover by Young's inequality, we have
\begin{equation}
\label{wers}
\begin{aligned}
&\frac{1}{2}\frac{d}{dt}|U|_{\mathbb{L}^2}^2 + \frac{\kappa|\Omega|^{1 - \frac{q}{2}}}{8}|U|_{\mathbb{L}^2}^q
+2\lambda\varphi(U) + r\varepsilon\psi_r(U) + \frac{q\kappa}{2}\psi_q(U)\\
&\leq
\left(1-\frac{1}{q}\right)
\left(\frac{q\kappa|\Omega|^{1-\frac{q}{2}}}{8}\right)^{-\frac{1}{q-1}}
|F|_{\mathbb{L}^2}^{\frac{q}{q-1}}+C_3.
\end{aligned}
\end{equation}

Put \(m = \min_{0 \leq t \leq T}|U(t)|_{\mathbb{L}^2}\) and \(M = \max_{0 \leq t \leq T}|U(t)|_{\mathbb{L}^2}\).
Then we have
\[
\begin{aligned}
M^2 &\leq m^2 + 
2\left(1-\frac{1}{q}\right)
\left(\frac{q\kappa|\Omega|^{1-\frac{q}{2}}}{8}\right)^{-\frac{1}{q-1}}
\int_0^T|F|_{\mathbb{L}^2}^{\frac{q}{q-1}}dt + 2C_3T\\
&\leq m^2 + 
2\left(1-\frac{1}{q}\right)
\left(\frac{q\kappa|\Omega|^{1-\frac{q}{2}}}{8}\right)^{-\frac{1}{q-1}}
T^{\frac{q-2}{2(q-1)}}
|F|_{\mathcal{H}^T}^{\frac{q}{q-1}} + 2C_3T,
\end{aligned}
\]
whence follows
\begin{equation}
\label{asdas}
M \leq m +
\sqrt{2}\left(1-\frac{1}{q}\right)^{\frac{1}{2}}
\left(\frac{q\kappa|\Omega|^{1-\frac{q}{2}}}{8}\right)^{-\frac{1}{2(q-1)}}
T^{\frac{q-2}{4(q-1)}}
|F|_{\mathcal{H}^T}^{\frac{q}{2(q-1)}}
+\sqrt{2C_3T}.
\end{equation}

Consequently, integrating \eqref{wers} with respect to \(t\) over \((0,T)\), we obtain
\[
\begin{aligned}
&\frac{\kappa|\Omega|^{1 - \frac{q}{2}}}{8}Tm^q
+2\lambda\int_0^T\varphi(U(t))dt + r\varepsilon\int_0^T\psi_r(U(t))dt + \frac{q\kappa}{2}\int_0^T\psi_q(U(t))dt\\
&\leq
\left(\frac{q\kappa|\Omega|^{1-\frac{q}{2}}}{8}\right)^{-\frac{1}{q-1}}
T^{\frac{q-2}{2(q-1)}}
|F|_{\mathcal{H}^T}^{\frac{q}{q-1}}
+C_3T,
\end{aligned}
\]
which implies
\begin{equation}
\label{lkjs}
m \leq
\left[
\frac{8}{T\kappa|\Omega|^{1 - \frac{q}{2}}}
\left\{
\left(\frac{q\kappa|\Omega|^{1-\frac{q}{2}}}{8}\right)^{-\frac{1}{q-1}}
T^{\frac{q-2}{2(q-1)}}
|F|_{\mathcal{H}^T}^{\frac{q}{q-1}}
+C_3T
\right\}
\right]^{\frac{1}{q}}.
\end{equation}

Thus \eqref{1e} follows from \eqref{asdas} and \eqref{lkjs}.
\end{proof}

\begin{Lem}\label{2nd_energy}
Let \(U = U_\varepsilon\) be the periodic solution of (AE)\(_\varepsilon\) and assume \((\frac{\alpha}{\lambda},\frac{\beta}{\kappa})\in{\rm CGL}(c_q^{-1})\).
Then there exists a constant \(C\) depending only on \(|\Omega|\), \(T\), \(q\), \(\lambda,\kappa,\alpha,\beta,\gamma\) and \(|F|_{\mathcal{H}^T}\) but not on \(\varepsilon\) such that
\begin{equation}\label{zaqwss}
\begin{aligned}
&\sup_{t \in [0,T]}\varphi(U(t))
+\sup_{t \in [0, T]}\psi_q(U(t))
+\sup_{t \in [0, T]}\varepsilon \psi_r(U(t))\\
&+\int_0^T|\partial \varphi(U(t))|_{\mathbb{L}^2}^2dt
+\int_0^T|\partial \psi_q(U(t))|_{\mathbb{L}^2}^2dt\\
&+\int_0^T \left| \frac{dU(t)}{dt}\right|_{\mathbb{L}^2}^2 dt
+ \varepsilon^2\int_0^T|\partial\psi_r(U(t))|_{\mathbb{L}^2}^2dt
\leq C.
\end{aligned}
\end{equation}
\end{Lem}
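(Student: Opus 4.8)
The plan is to reproduce, in the periodic setting, the chain of energy estimates used for the Cauchy problem in \cite{KOS1}, with the two structural changes forced by periodicity: the relation $U(0)=U(T)$ replaces an initial condition and is used to cancel total-derivative terms whenever a differential inequality is integrated over $(0,T)$, and a mean-value argument converts $\mathrm{L}^2(0,T)$-in-time bounds into pointwise bounds on $[0,T]$. At every step the $\varepsilon$-independent estimates of Lemma~\ref{1st_energy} on $\sup_{[0,T]}|U|_{\mathbb{L}^2}$, $\int_0^T\varphi(U)\,dt$, $\varepsilon\int_0^T\psi_r(U)\,dt$ and $\int_0^T\psi_q(U)\,dt$ supply the control of the right-hand sides.

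\emph{Leading estimate.} I would test (AE)\(_\varepsilon\) in $\mathbb{L}^2(\Omega)$ with $\partial\varphi(U)$ and with $\partial\psi_q(U)$. Testing with $\partial\varphi(U)$ gives $\tfrac{d}{dt}\varphi(U)+\lambda|\partial\varphi(U)|_{\mathbb{L}^2}^2$ together with the nonnegative terms $\varepsilon(\partial\psi_r(U),\partial\varphi(U))_{\mathbb{L}^2}$ and $\kappa(\partial\psi_q(U),\partial\varphi(U))_{\mathbb{L}^2}$ furnished by \eqref{angle}, the term $\alpha I\partial\varphi(U)$ dropping out by \eqref{skew-symmetric_property}; testing with $\partial\psi_q(U)$ gives $\tfrac{d}{dt}\psi_q(U)+\kappa|\partial\psi_q(U)|_{\mathbb{L}^2}^2+\lambda(\partial\varphi(U),\partial\psi_q(U))_{\mathbb{L}^2}$ together with a nonnegative $\varepsilon$-term, the term $\beta I\partial\psi_q(U)$ dropping out by \eqref{skew-symmetric_property}. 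The only indefinite contributions are the cross terms $\beta(I\partial\psi_q(U),\partial\varphi(U))_{\mathbb{L}^2}$ and $\alpha(I\partial\varphi(U),\partial\psi_q(U))_{\mathbb{L}^2}$, which, by \eqref{skew-symmetric_property} and the key inequality \eqref{key_inequality_1}, are dominated by multiples of $(\partial\varphi(U),\partial\psi_q(U))_{\mathbb{L}^2}$. Forming a linear combination of the two identities with positive weights $\ell_1,\ell_2$ — chosen, following \cite{KOS1}, so that these indefinite terms are absorbed into $\ell_1\lambda|\partial\varphi(U)|_{\mathbb{L}^2}^2+\ell_2\kappa|\partial\psi_q(U)|_{\mathbb{L}^2}^2+(\ell_1\kappa+\ell_2\lambda)(\partial\varphi(U),\partial\psi_q(U))_{\mathbb{L}^2}$, which is possible precisely because $(\tfrac{\alpha}{\lambda},\tfrac{\beta}{\kappa})\in{\rm CGL}(c_q^{-1})$ — leads, for some $\delta>0$, to
\[
\frac{d}{dt}\bigl(\ell_1\varphi(U)+\ell_2\psi_q(U)\bigr)+\delta\bigl(|\partial\varphi(U)|_{\mathbb{L}^2}^2+|\partial\psi_q(U)|_{\mathbb{L}^2}^2\bigr)\le\bigl(F+\gamma U,\ \ell_1\partial\varphi(U)+\ell_2\partial\psi_q(U)\bigr)_{\mathbb{L}^2},
\]
and Young's inequality bounds the right-hand side by $\tfrac{\delta}{2}(|\partial\varphi(U)|_{\mathbb{L}^2}^2+|\partial\psi_q(U)|_{\mathbb{L}^2}^2)+C(|U|_{\mathbb{L}^2}^2+|F|_{\mathbb{L}^2}^2)$.

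\emph{Integration and the remaining multipliers.} Integrating the last inequality over $(0,T)$ and using $U(0)=U(T)$ to drop $\int_0^T\tfrac{d}{dt}(\ell_1\varphi(U)+\ell_2\psi_q(U))\,dt$, Lemma~\ref{1st_energy} then gives $\int_0^T(|\partial\varphi(U)|_{\mathbb{L}^2}^2+|\partial\psi_q(U)|_{\mathbb{L}^2}^2)\,dt\le C$ with $C$ independent of $\varepsilon$. Since the inequality also reads $\tfrac{d}{dt}(\ell_1\varphi(U)+\ell_2\psi_q(U))\le H(t)$ with $\int_0^TH^+\,dt\le C$, picking $t_\ast\in[0,T]$ minimizing $\ell_1\varphi(U(t))+\ell_2\psi_q(U(t))$ — whose value is at most $\tfrac1T\int_0^T(\ell_1\varphi(U)+\ell_2\psi_q(U))\,dt\le C$ by Lemma~\ref{1st_energy} — and integrating from $t_\ast$ yields $\sup_{[0,T]}(\ell_1\varphi(U)+\ell_2\psi_q(U))\le C$, i.e. the $\varphi$- and $\psi_q$-supremum part of \eqref{zaqwss}. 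Testing (AE)\(_\varepsilon\) with $\varepsilon\partial\psi_r(U)$, and using $(\partial\varphi(U),\partial\psi_r(U))_{\mathbb{L}^2}\ge0$, $(\partial\psi_q(U),\partial\psi_r(U))_{\mathbb{L}^2}\ge0$, the orthogonality $(\partial\psi_q(U),I\partial\psi_r(U))_{\mathbb{L}^2}=0$ of \eqref{orth:Ipsi}, and Young's inequality on the lone indefinite term $\varepsilon\alpha(I\partial\varphi(U),\partial\psi_r(U))_{\mathbb{L}^2}$, gives $\varepsilon\tfrac{d}{dt}\psi_r(U)+\tfrac{\varepsilon^2}{4}|\partial\psi_r(U)|_{\mathbb{L}^2}^2\le C(|\partial\varphi(U)|_{\mathbb{L}^2}^2+|U|_{\mathbb{L}^2}^2+|F|_{\mathbb{L}^2}^2)$; integrating over $(0,T)$ with periodicity yields $\varepsilon^2\int_0^T|\partial\psi_r(U)|_{\mathbb{L}^2}^2\,dt\le C$, and the mean-value argument applied to $t\mapsto\varepsilon\psi_r(U(t))$, with $\varepsilon\int_0^T\psi_r(U)\,dt\le C$ from Lemma~\ref{1st_energy}, gives $\sup_{[0,T]}\varepsilon\psi_r(U)\le C$. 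Finally, testing (AE)\(_\varepsilon\) with $\tfrac{dU}{dt}$, using $(\partial\varphi(U),\tfrac{dU}{dt})_{\mathbb{L}^2}=\tfrac{d}{dt}\varphi(U)$ and its analogues for $\psi_q$ and $\psi_r$, and Young's inequality on $(\alpha I\partial\varphi(U)+\beta I\partial\psi_q(U)-\gamma U-F,\tfrac{dU}{dt})_{\mathbb{L}^2}$, gives
\[
\tfrac12\bigl|\tfrac{dU}{dt}\bigr|_{\mathbb{L}^2}^2+\frac{d}{dt}\Bigl(\lambda\varphi(U)+\varepsilon\psi_r(U)+\kappa\psi_q(U)-\tfrac{\gamma}{2}|U|_{\mathbb{L}^2}^2\Bigr)\le C\bigl(|\partial\varphi(U)|_{\mathbb{L}^2}^2+|\partial\psi_q(U)|_{\mathbb{L}^2}^2+|F|_{\mathbb{L}^2}^2\bigr),
\]
and integrating over $(0,T)$ with periodicity, combined with all the bounds already obtained, gives $\int_0^T|\tfrac{dU}{dt}|_{\mathbb{L}^2}^2\,dt\le C$. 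Collecting everything proves \eqref{zaqwss}.

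The main obstacle is the weight selection in the leading estimate — showing that the indefinite cross terms generated by the monotone perturbation $\alpha I\partial\varphi(U)$ and the non-monotone perturbation $\beta I\partial\psi_q(U)$ can be absorbed into the coercive quantities $|\partial\varphi(U)|_{\mathbb{L}^2}^2$, $|\partial\psi_q(U)|_{\mathbb{L}^2}^2$ and $(\partial\varphi(U),\partial\psi_q(U))_{\mathbb{L}^2}$. This is exactly where the key inequality \eqref{key_inequality_1} and the precise geometry of ${\rm CGL}(c_q^{-1})$ are indispensable, and it is carried out as in \cite{KOS1}; everything else above is the routine replacement of the initial-value estimates there by periodicity together with the mean-value trick.
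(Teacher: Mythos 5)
Your proposal follows essentially the same route as the paper: the same multipliers $\partial\varphi(U)$, $\partial\psi_q(U)$, $\varepsilon\partial\psi_r(U)$, the same weighted combination absorbing the cross terms via the key inequality \eqref{key_inequality_1} and the geometry of ${\rm CGL}(c_q^{-1})$, and the same periodicity-plus-mean-value device for the supremum bounds. The only cosmetic deviations (obtaining the $\frac{dU}{dt}$ bound by testing with $\frac{dU}{dt}$ rather than reading it off the equation, and bounding the supremum of the combined functional rather than of $\varphi$, $\psi_q$, $\varepsilon\psi_r$ separately) are equally valid and change nothing essential.
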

\begin{proof}
Multiplication of (AE)\(_\varepsilon\) by 
   $\partial \varphi(U)$ and   
     $\partial \psi_q(U)$ together with \eqref{angle} and \eqref{skew-symmetric_property} 
         give 
   \begin{align} \label{afdidfhafdkjftt} 
    & 
\frac{d}{dt} \varphi(U)
       + \lambda |\partial \varphi(U)|_{\mathbb{L}^2}^2
         + \kappa G
          + \beta B
            \leq 2 \gamma_+ \varphi(U)
              + (F, \partial \varphi(U))_{\mathbb{L}^2},
\\
    \label{fsdksdfk}
        &
\frac{d}{dt} \psi_q(U(t))
      + \kappa |\partial \psi_q(U)|_{\mathbb{L}^2}^2
        + \lambda G
          - \alpha  B
            \leq q \gamma_+ \psi_q(U(t))
              + (F, \partial \psi_q(U))_{\mathbb{L}^2},
   \end{align}
   where
   $\gamma_+:=\max \{\gamma, 0\}$ and
   \begin{equation*}
     G:=(\partial \varphi(U), 
          \partial \psi_q(U))_{\mathbb{L}^2}, \quad 
         B:=(\partial \varphi(U), 
               I \partial \psi_q(U))_{\mathbb{L}^2}.
   \end{equation*}
 We add \eqref{afdidfhafdkjftt}$\times \delta^2$ to \eqref{fsdksdfk}
   for some $\delta >0$ to get
\begin{equation}\label{sfdsldflsdfsl}
   \begin{aligned}
    &\frac{d}{dt} \left\{
      \delta^2 \varphi(U) + \psi_q(U) \right\}
        + \delta^2 \lambda |\partial \varphi(U)|_{\mathbb{L}^2}^2
          + \kappa |\partial \psi_q(U)|_{\mathbb{L}^2}^2
\\
&+(\delta^2 \kappa+\lambda) G
        +(\delta^2 \beta - \alpha )B
\\
&       \leq  \gamma_+\left\{
         2 \delta^2 \varphi(U) + q \psi_q(U)  \right\}
            + (F, \delta^2 \partial \varphi(U)
                + \partial \psi_q(U))_{\mathbb{L}^2}.
   \end{aligned}
\end{equation}
Here we introduce an another parameter \(\epsilon\in(0,\min\{\lambda, \kappa\})\).
   By the inequality of arithmetic and geometric means,
   and the Bessel' inequality, 
   we have
\begin{equation}
   \begin{aligned}
    &\delta^2 \lambda |\partial \varphi(U)|_{\mathbb{L}^2}^2
    + \kappa |\partial \psi_q(U)|_{\mathbb{L}^2}^2
\\
&    =\epsilon 
    \left\{
    \delta^2  |\partial \varphi(U)|_{\mathbb{L}^2}^2
    + |\partial \psi_q(U)|_{\mathbb{L}^2}^2
    \right\}
    +(\lambda- \epsilon) \delta^2 |\partial 
    \varphi(U)|_{\mathbb{L}^2}^2
    +(\kappa  - \epsilon) |\partial
    \psi_q(U)|_{\mathbb{L}^2}^2
\\
&    \geq \epsilon 
    \left\{
    \delta^2  |\partial \varphi(U)|_{\mathbb{L}^2}^2
    + |\partial \psi_q(U)|_{\mathbb{L}^2}^2
    \right\}
    +2\sqrt{
    (\lambda-\epsilon)(\kappa-\epsilon)\delta^2 |\partial 
    \varphi(U)|_{\mathbb{L}^2}^2 |\partial 
    \psi_q(U)|_{\mathbb{L}^2}^2
    }
\\
    \label{fdfkskgkd}
 &   \geq 
    \epsilon
    \left\{
    \delta^2  |\partial \varphi(U)|_{\mathbb{L}^2}^2
    + |\partial \psi_q(U)|_{\mathbb{L}^2}^2
    \right\}
    +2\sqrt{
    (\lambda-\epsilon)(\kappa-\epsilon)\delta^2
    (G^2+B^2)
    }.
   \end{aligned}
\end{equation}
   We here recall the key inequality
   \eqref{key_inequality_1}
   \begin{align}
    \label{fadfks}
    G \geq c_q^{-1}|B|.
   \end{align}
 Hence  
   \eqref{sfdsldflsdfsl}, \eqref{fdfkskgkd}
   and \eqref{fadfks} yield
\begin{equation}
   \begin{aligned}
&\frac{d}{dt} \left\{
    \delta^2 \varphi(U) \!+\! \psi_q(U) \right\}
    \!+\!\epsilon
    \left\{
    \delta^2  |\partial \varphi(U)|_{\mathbb{L}^2}^2
    \!+\! |\partial \psi_q(U)|_{\mathbb{L}^2}^2
    \right\}
    \!+\!J(\delta, \epsilon)|B|
\\
    \label{sfdsldflsksfdhsl}
&    \leq 
\gamma_+\left\{
    2 \delta^2 \varphi(U) \!+\!q\psi_q(U)  \right\}
    \!+\! (F, \delta^2 \partial \varphi(U)
    \!+\! \partial \psi_q(U))_{\mathbb{L}^2}.
   \end{aligned}
\end{equation}
   where
   \begin{align*}
    J(\delta, \epsilon):=2 \delta \sqrt{
    (1+c_q^{-2})(\lambda-\epsilon)(\kappa-\epsilon)}
    + c_q^{-1}(\delta^2 \kappa+\lambda)
    -|\delta^2 \beta - \alpha|.
   \end{align*}

 Now we are going to show that 
   $(\frac{\alpha}{\lambda}, \frac{\beta}{\kappa})
   \in {\rm CGL}(c_q^{-1})$ assures 
   $J(\delta, \epsilon) \geq 0$
   for some $\delta$ and $\epsilon$.
   By the continuity of 
   $ J(\delta, \cdot) : \epsilon \mapsto J(\delta, \epsilon)$
   it suffices to show
   $J(\delta, 0) > 0$
   for some $\delta$.
   When $\alpha \beta >0$,
   it is enough to take
   $\delta=\sqrt{\alpha / \beta}$.
   When $\alpha \beta \leq 0$,
   we have
   $|\delta^2 \beta - \alpha|
   =\delta^2 |\beta|+ |\alpha|$.
  Hence
   \begin{align*}
    J(\delta, 0) 
    = (c_q^{-1}\kappa -|\beta|)\delta^2
    +2 \delta \sqrt{(1+c_q^{-2})\lambda \kappa} 
    +(c_q^{-1}\lambda-|\alpha|).
   \end{align*}
 Therefore if $|\beta|/ \kappa \leq c_q^{-1}$,
   we get $J(\delta, 0) > 0$
   for sufficiently large $\delta >0$.
   If $c_q^{-1} < |\beta| / \kappa$,
   we find that it is enough to see the 
   discriminant is positive:
   \begin{align}
    D/4:=(1+c_q^{-2})\lambda \kappa
    -(c_q^{-1}\kappa -|\beta|)
    (c_q^{-1}\lambda-|\alpha|)>0.
   \end{align}
 Since
   \begin{align*}
    D/4>0 
    \Leftrightarrow
    \frac{|\alpha|}{\lambda}\frac{|\beta|}{\kappa}-1
    <c_q^{-1}\left(
    \frac{|\alpha|}{\lambda}+\frac{|\beta|}{\kappa}
    \right),
   \end{align*}
   the condition
   $(\frac{\alpha}{\lambda}, \frac{\beta}{\kappa})
   \in {\rm CGL}(c_q^{-1})$ yields
   $D>0$,
   whence $J(\delta, 0)>0$ for some $\delta\neq0$.

Now we take $\delta\neq0$ and $\epsilon>0$ such that 
   $J(\delta, \epsilon) \geq 0$.

Applying Young's inequality to \eqref{sfdsldflsksfdhsl}, we obtain
\begin{equation}
    \label{qazs}
   \begin{aligned}
&\frac{d}{dt} \left\{
    \delta^2 \varphi(U) \!+\! \psi_q(U) \right\}
    \!+\!\frac{\epsilon}{2}
    \left\{
    \delta^2  |\partial \varphi(U)|_{\mathbb{L}^2}^2
    \!+\! |\partial \psi_q(U)|_{\mathbb{L}^2}^2
    \right\}
    \!+\!J(\delta, \epsilon)|B|
\\
&    \leq 
\gamma_+\left\{
    2 \delta^2 \varphi(U) \!+\!q\psi_q(U)  \right\}
    \!+\! \frac{1+\delta^2}{2\epsilon}|F|_{\mathbb{L}^2}^2.
   \end{aligned}
\end{equation}
We integrate \eqref{qazs} with respect to \(t\) over \((0,T)\) and then by Lemma \ref{1st_energy} we obtain
\begin{equation}\label{plmj}
\int_0^T|\partial \varphi(U(t))|_{\mathbb{L}^2}^2dt
+\int_0^T|\partial \psi_q(U(t))|_{\mathbb{L}^2}^2dt\leq C.
\end{equation}

Multiplying (AE)\(_\varepsilon\) by \(\varepsilon\partial\psi_r(U)\) and applying Young's inequality, we obtain by \eqref{orth:Ipsi} and \eqref{angle},
\begin{equation}\label{plmk}
\varepsilon\frac{d}{dt}\psi_r(U) + \frac{\varepsilon^2}{2}|\partial\psi_r(U)|_{\mathbb{L}^2}^2 \leq r\gamma_+\varepsilon\psi_r(U) + \alpha^2|\partial\varphi(U)|_{\mathbb{L}^2}^2 + |F(t)|_{\mathbb{L}^2}^2.
\end{equation}
We integrate \eqref{plmk} with respect to \(t\) over \((0,T)\), then by \eqref{1e} and \eqref{plmj}, we obtain
\begin{equation}
\varepsilon^2\int_0^T|\partial \psi_r(U(t))|_{\mathbb{L}^2}^2dt\leq C.
\end{equation}
Hence by equation (AE)\(_\varepsilon\), we can estimate the time derivative of solutions, i.e.,
\begin{equation}
\int_0^T\left|\frac{dU(t)}{dt}\right|_{\mathbb{L}^2}^2dt\leq C.
\end{equation}

Now we are going to derive a priori estimates for the first three terms in \eqref{zaqwss}.
Applying \eqref{angle} and Young's inequality to \eqref{afdidfhafdkjftt} and \eqref{fsdksdfk}, we obtain
\begin{align} \label{wsx} 
    & 
\begin{aligned}
&\frac{d}{dt} \varphi(U)
       + \frac{\lambda}{2} |\partial \varphi(U)|_{\mathbb{L}^2}^2 +\varphi(U)\\
&            \leq (2 \gamma_+ +1)\varphi(U)
              + \frac{1}{\lambda}|F|_{\mathbb{L}^2}^2 + \frac{\beta^2}{\lambda}|\partial\psi_q(U)|_{\mathbb{L}^2}^2,
\end{aligned}\\
    \label{edc}
        &
\begin{aligned}
&\frac{d}{dt} \psi_q(U(t))
      + \frac{\kappa}{2} |\partial \psi_q(U)|_{\mathbb{L}^2}^2 + \psi_q(U(t))\\
&             \leq (q \gamma_++1) \psi_q(U(t))
              + \frac{1}{\kappa}|F|_{\mathbb{L}^2}^2+ \frac{\alpha^2}{\kappa}|\partial\varphi(U)|_{\mathbb{L}^2}^2.
\end{aligned}
   \end{align}
Moreover we modify \eqref{plmk} as
\begin{equation}
\begin{aligned}
&\varepsilon\frac{d}{dt}\psi_r(U) + \frac{\varepsilon^2}{2}|\partial\psi_r(U)|_{\mathbb{L}^2}^2+\varepsilon\psi_r(U)\\ 
&\leq (r\gamma_++1)\varepsilon\psi_r(U) + \alpha^2|\partial\varphi(U)|_{\mathbb{L}^2}^2 + |F(t)|_{\mathbb{L}^2}^2.
\end{aligned}
\end{equation}

Since the arguments for deducing estimates concerning \(\sup_{t\in[0,T]}\psi_q(U(t))\) and \(\sup_{t\in[0,T]}\varepsilon\psi_r(U(t))\) are the same as that for \(\sup_{t\in[0,T]}\varphi(U(t))\), we here only show how to deduce the estimate for \(\sup_{t\in[0,T]}\varphi(U(t))\).

Set \(m_1 = \min_{0 \leq t \leq T}\varphi(U) = \varphi(U(t_1))\) and \(M_1 = \max_{0 \leq t \leq T}\varphi(U)=\varphi(U(t_2))\) with \(t_2\in(t_1,t_1+T]\).
Then we integrate \eqref{wsx} with respect to \(t\) on \((t_1,t_2)\).
Noting \eqref{1e} and \eqref{plmj}, we obtain
\[
M_1 \leq m_1 + C.
\]
On the other hand, integrating \eqref{wsx} with respect to \(t\) over \((0,T)\), by \eqref{1e} and \eqref{plmj} we obtain
\[
m_1T \leq C,
\]
whence the following estimate holds:
\[
M_1 \leq \left(1+\frac{1}{T}\right)C,
\]
which gives the desired estimate.
\end{proof}
\begin{proof}[Proof of Theorem \ref{MTHM}]
By \eqref{1e}, \eqref{zaqwss} and Rellich-Kondrachov's theorem, \(\{U_\varepsilon(t)\}_{\varepsilon>0}\) forms a compact set for all \(t\in[0,T]\).
Moreover the \(\mathrm{L}^2(0,T;\mathbb{L}^2(\Omega))\) estimate for \(\frac{dU}{dt}\) in \eqref{zaqwss} ensures that \(\{U_\varepsilon\}_{\varepsilon>0}\) is equi-continuous.
Hence by Ascoli's theorem, there exists subsequence \(\{U_n\}_{n\in\mathbb{N}}:=\{U_{\varepsilon_n}\}_{n\in\mathbb{N}}\) of \(\{U_\varepsilon(t)\}_{\varepsilon>0}\) which converges strongly in \({\rm C}([0,T];\mathbb{L}^2(\Omega))\).

On the other hand, by \eqref{zaqwss} and the demi-closedness of \(\partial\varphi,\partial\psi_q,\frac{d}{dt}\) we can extract a subsequence of \(\{U_n\}_{n\in\mathbb{N}}\) denoted again by \(\{U_n\}_{n\in\mathbb{N}}\) such that
\begin{align}\label{rfv}
U_n&\to U&&\mbox{strongly in}\ {\rm C}([0,T];\mathbb{L}^2(\Omega)),\\
\partial\varphi(U_n)&\rightharpoonup \partial\varphi(U)&&\mbox{weakly in}\ {\rm L}^2(0,T;\mathbb{L}^2(\Omega)),\\
\partial\psi_q(U_n)&\rightharpoonup \partial\psi_q(U)&&\mbox{weakly in}\ {\rm L}^2(0,T;\mathbb{L}^2(\Omega)),\\
\varepsilon_n\partial\psi_r(U_n)&\rightharpoonup g&&\mbox{weakly in}\ {\rm L}^2(0,T;\mathbb{L}^2(\Omega)),\\
\frac{dU_n}{dt}&\rightharpoonup \frac{dU}{dt}&&\mbox{weakly in}\ {\rm L}^2(0,T;\mathbb{L}^2(\Omega)),
\end{align}
where \(g \in {\rm L}^2(0,T;\mathbb{L}^2(\Omega))\).

What we have to do is to show that \(g=0\).
Due to \eqref{zaqwss}, we get
\[
\begin{aligned}
|\varepsilon|U_\varepsilon|^{r - 2}U_\varepsilon|_{\mathbb{L}^{\frac{r}{r - 1}}}^{\frac{r}{r - 1}}= \int_\Omega\varepsilon^{\frac{r}{r - 1}}|U_\varepsilon|^rdx= \varepsilon^{\frac{1}{r - 1}}\varepsilon\int_\Omega|U_\varepsilon|^rdx \leq \varepsilon^{\frac{1}{r - 1}}rC \to 0&\\
\mbox{as}\ \varepsilon \to 0\quad\mbox{uniformly on}\ [0,T],&
\end{aligned}
\]
which yields
\begin{equation}
\label{edelpsir-Uerr-1}
\varepsilon_n|U_n|^{r - 2}U_n \rightarrow 0\quad\mbox{strongly in}\ \mathbb{L}^{\frac{r}{r - 1}}(\Omega)\quad\mbox{uniformly on}\ [0, T].
\end{equation}
Hence
\[
\varepsilon_n|U_n|^{r - 2}U_n \rightarrow 0=g\quad\mbox{in}\ \mathcal{D}'((0,T)\times\Omega).
\]

Therefore \(U\) satisfies the equation (ACGL) and the convergence \eqref{rfv} ensures that \(U(0)=U(T)\).
\end{proof}


\end{document}